\documentclass{amsart}
\usepackage{amsthm,amssymb,mathtools}
\usepackage{stmaryrd,tikz,pgfplots}
\usepackage{marginnote}

\usepackage[textsize=tiny]{todonotes}

\newcommand{\Marginpar}[1]{\marginpar{\tiny{#1}}}
\newcommand{\Note}[1]{{\par\noindent\hrulefill\par\tiny{#1}\par\noindent\hrulefill\par}}
\newcommand{\Detail}[1]{{#1}}
\renewcommand{\Marginpar}[1]{}
\renewcommand{\Note}[1]{}
\renewcommand{\Detail}[1]{}

\usepackage{hyperref}
\usepackage[all]{xy}
\usepackage{amsmath,amsfonts}	
\usepackage{amsthm,latexsym,amssymb}
\usepackage{tensor}

\usepackage{booktabs} 
\usepackage{siunitx}

\newtheorem{thm}{Theorem}
\newtheorem{prop}[thm]{Proposition}

\newtheorem{cor}[thm]{Corollary}

\theoremstyle{definition}
\newtheorem{defn}[thm]{Definition}

\newtheorem{rem}[thm]{Remark}

\renewcommand{\[}{\begin{equation*}}
\renewcommand{\]}{\end{equation*}}

\def\z{\mathfrak{z}}

\DeclareMathOperator\vol{vol}

\begin{document}
\parskip1mm

\title[Hermitian metrics of constant Chern scalar curvature]{Hermitian metrics of constant Chern scalar curvature on ruled surfaces}

\author{Caner Koca}
\address{Department of Mathematics, NYC College of Technology of CUNY, Brooklyn, NY 11021, USA.}
\email{ckoca@citytech.cuny.edu}

\author{Mehdi Lejmi}
\address{Department of Mathematics, Bronx Community College of CUNY, Bronx, NY 10453, USA.}
\email{mehdi.lejmi@bcc.cuny.edu}

\thanks{The first author was supported in part by the PSC-CUNY award \#61768-00 49, jointly funded by The Professional Staff Congress and The City University of New York.
The second author is supported by the Simons Foundation Grant \#636075. The second author was also supported in Summer 2019 by the PSC-CUNY award\# 62380-00 50, jointly funded by The Professional Staff Congress and The City University of New York.}

\begin{abstract}
It is known that Hirzebruch surfaces of non zero degree do not admit any constant scalar curvature K\"ahler metric~\cite{MR2807093,gauduchonbook,Martinez-Garcia:2017aa}.
In this note, we describe how to construct Hermitian metrics of positive constant Chern scalar curvature on Hirzebruch surfaces using Page--B\'erard-Bergery's ansatz~\cite{page1978compact,MR727843}.
We also construct the interesting case of Hermitian metrics of zero Chern scalar curvature on some ruled surfaces.
Furthermore, we discuss the problem of the existence in a conformal class of critical metrics of the total Chern scalar curvature, studied by Gauduchon in~\cite{MR567760,MR742896}.
\end{abstract}
\maketitle
\section{introduction}
A Hermitian manifold $(M,J,g)$ of real dimension $2n$ is a manifold $M$ equipped with an integrable almost-complex structure $J$ and a Riemannian metric
such that $J$ is orthogonal with respect to $g$. The metric $g$ is said to be K\"ahler if the induced fundamental form $F(\cdot,\cdot):=g(J\cdot,\cdot)$ is closed.
On a Hermitian manifold, one can consider the Chern connection $\nabla$ defined as the unique Hermitian connection
with $J$-anti-invariant torsion~\cite{MR0066733,MR0165458,MR1456265}. The Chern connection actually induces the canonical Cauchy--Riemann operator
on the Hermitian tangent bundle $(T(M),J,g)$ viewed as a holomorphic bundle. From the Chern connection, one can derive the Chern scalar curvature $s^C$ (see Definition~\ref{def_scalar}). On a closed Hermitian manifold $(M,J,g)$, the Chern scalar curvature does not coincide with the Riemannian scalar curvature (derived from the Levi-Civita connection)
unless the metric $g$ is K\"ahler~\cite{MR742896,MR3632564}.

In the general (almost-)Hermitian setting, it is natural to study the existence of metrics of constant Chern scalar curvature
(see for example \cite{MR2795448,MR2917134,MR2988734,MR3696598,Angella:aa,Shen:aa} and the references therein).
For instance, Angella, Calamai and Spotti~\cite{MR3696598} initiated the Chern--Yamabe problem (see also~\cite{MR0301680,MR779217}), that is an analogue of the Yamabe problem~\cite{MR0125546}, namely they studied the existence and the uniqueness of constant Chern scalar curvature metrics in the conformal class of $g$ on a closed Hermitian manifold $(M,J,g)$. In particular, they proved that when the fundamental constant $C(J,[g])$ (see Definition~\ref{fundamental_constant} or for instance~\cite{MR0486672,MR779217,MR1712115}) is negative or zero
then there exists in the conformal class a metric with the Chern scalar curvature equal to $C(J,[g])$. Later, the problem was extended to the almost-Hermitian setting
in~\cite{Lejmi:2017aa}.  
Moreover, a parabolic version of the Chern--Yamabe problem was studied in~\cite{MR3833814,Calamai:aa,ho2019results}. However,
the problem remains widely open when the fundamental constant $C(J,[g])$ is positive. We remark that
for complex surfaces $C(J,[g])$ being positive implies that the complex surface has to be of negative Kodaira dimension by vanishing theorems of Gauduchon~\cite{MR0470920}.

The $m$-Hirzebruch surfaces $\mathbb{M}_m=\mathbb{P}\left(\mathcal{O}_{\mathbb{CP}^1}\oplus \mathcal{O}_{\mathbb{CP}^1}(m)  \right)\longrightarrow\mathbb{CP}^1$ are of negative Kodaira dimension and K\"ahler
(here $\mathbb{CP}^1$ is the complex projective space of complex dimension $1$ and $m\geq 0$ is an integer),
however it is known that $\mathbb{M}_m$ {does not admit any constant scalar curvature K\"ahler metric} when $m>0$~\cite{MR2807093,gauduchonbook,Martinez-Garcia:2017aa}. 

After the preliminaries in Section~\ref{Sec_2}, we describe in Section~\ref{Sec_3} how to construct on the Hirzebruch surfaces $\mathbb{M}_m$ (with $m>0$) Hermitian metrics of positive constant Chern scalar curvature  
using a well-known ansatz due to Page~\cite{page1978compact} and generalized by B\'erard-Bergery~\cite{MR727843} by considering the $U(2)$-invariant metrics of the form
\begin{equation}\label{metric_intro}
g=h(t)^2\left(e^1\otimes e^1+e^2\otimes e^2 \right)+f(t)^2e^3\otimes e^3+dt^2,
\end{equation}
where $t$ is a coordinate transverse to the $U(2)$-orbits, and $e^1,e^2,e^3$ are the invariant $1$-forms on $S^3$ dual to the vectors $X,Y,V$ respectively,
and $h(t)$ and $f(t)$ are positive functions satisfying certain boundary conditions. We obtain then the following
\begin{thm}\label{chern_constant}
There exists on the $m$-Hirzebruch surface (with $m>0$) conformally K\"ahler metrics of positive constant Chern scalar curvature of the form~(\ref{metric_intro}).
\end{thm}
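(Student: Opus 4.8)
The strategy is to construct the desired metrics as conformal rescalings of the Calabi-type K\"ahler metrics on $\mathbb{M}_m$, with conformal factor depending only on the transverse coordinate $t$, and then to solve the resulting ordinary differential equation. First I would recall from Page~\cite{page1978compact} and B\'erard-Bergery~\cite{MR727843} the precise boundary behaviour of $h$ and $f$ at $t=0$ and $t=L$ under which~(\ref{metric_intro}) extends to a smooth metric on $\mathbb{M}_m$, the two special $U(2)$-orbits collapsing onto the sections $C_0,C_\infty\cong\mathbb{CP}^1$ with collapsing rates $|f'|$ at the two ends encoding the integer $m$. Such a metric is Hermitian for the integrable complex structure and, moreover, conformally K\"ahler: its Lee (torsion) $1$-form is a multiple of $dt$, hence closed, and a short check across the special orbits shows it is globally exact on $\mathbb{M}_m$. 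Consequently $g=e^{2u(t)}g_0$ for a smooth function $u$, where $g_0$ is the K\"ahler metric of the same form~(\ref{metric_intro}) given by the same ansatz; conversely, rescaling any such $g_0$ by $e^{2u(t)}$ again yields a metric of the form~(\ref{metric_intro}). It is therefore enough to find $g_0$ and $u$ with $s^C_{e^{2u}g_0}$ a positive constant.

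Since $g_0$ is K\"ahler, $s^C_{g_0}$ is a constant multiple of its Riemannian scalar curvature, hence an explicit function of the momentum variable; and in complex dimension two the Chern scalar curvature transforms under $g=e^{2u}g_0$ by
\[
s^C_{g}=e^{-2u}\big(s^C_{g_0}+2\,\tr_{F_0}(\ddc u)\big),
\]
where $F_0$ is the fundamental form of $g_0$. Imposing $s^C_g\equiv\lambda$ thus reduces the problem to the single second-order equation
\[
s^C_{g_0}+2\,\tr_{F_0}(\ddc u)=\lambda\, e^{2u}
\]
for $u$ on $[0,L]$, whose only nonlinearity is the exponential right-hand side; in the momentum variable $\tau$ it reads $\big(P u'\big)'=\lambda\, q(\tau)\, e^{2u}+r(\tau)$ with $P\ge 0$ vanishing to first order at the two endpoints, $q>0$, and $r$ explicit. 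Two remarks make the reduced problem accessible. No boundary condition beyond smoothness in $\tau$ is needed on $u$, since a smooth function of the distance to a collapsing orbit is automatically even in that distance and the collapsing rates $|f'|$ are unchanged by the rescaling; so $e^{2u(t)}g_0$ extends smoothly over $\mathbb{M}_m$ for every smooth $u$. And integrating the equation against $dv_{g_0}$, using $dF_0=0$, forces
\[
\lambda=\frac{\int_{\mathbb{M}_m} s^C_{g_0}\,dv_{g_0}}{\int_{\mathbb{M}_m}e^{2u}\,dv_{g_0}},
\]
whose numerator is a fixed positive multiple of $c_1(\mathbb{M}_m)\cdot[\omega_0]$, and this is positive for \emph{every} K\"ahler class $[\omega_0]$ on $\mathbb{M}_m$; hence any solution automatically has positive Chern scalar curvature. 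This is as it must be, since $C(J,[g_0])>0$: we are in the genuinely open range of the Chern--Yamabe problem, and the results of Angella--Calamai--Spotti~\cite{MR3696598} and of~\cite{Lejmi:2017aa} do not apply.

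It remains to solve the reduced equation, and this is the main obstacle. The difficulty is exactly that for positive fundamental constant the Chern--Yamabe functional is not coercive --- it is unbounded below, by point concentration --- so one cannot simply minimise it. The $U(2)$-symmetry, however, should remove the obstruction: on functions of $t$ alone, concentration at a point is impossible, and one expects coercivity to be restored. Concretely I would recast $s^C_g=\lambda$ variationally on the space of $U(2)$-invariant conformal factors, establish an a priori bound on $u$ from the one-dimensional Sobolev (Moser--Trudinger) inequality on $[0,L]$ together with the normalisation of $\int e^{2u}\,dv_{g_0}$, extract a minimiser, and promote it to a smooth $U(2)$-invariant solution by ODE regularity. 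Alternatively one can argue by a shooting/continuity argument in the momentum variable, using the freedom in the choice of K\"ahler class of $g_0$ to meet the (regular-singular) conditions at the two endpoints. In either case the metric $e^{2u}g_0$ has the form~(\ref{metric_intro}), is conformally K\"ahler by construction, and has Chern scalar curvature equal to the positive constant displayed above --- and the construction produces such a metric for every K\"ahler $g_0$ on $\mathbb{M}_m$ --- which establishes Theorem~\ref{chern_constant}.
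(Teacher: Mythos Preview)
Your proposal outlines a reasonable strategy, but it is not a proof: the crucial analytic step is only asserted, not carried out. You write that on $U(2)$-invariant conformal factors ``one expects coercivity to be restored'' and that you ``would'' establish an a priori bound from a one-dimensional Moser--Trudinger inequality, extract a minimiser, and upgrade regularity; alternatively you ``can argue by a shooting/continuity argument''. None of this is actually done. In the positive range of the Chern--Yamabe problem the existence of a minimiser is exactly the delicate point, and restricting to cohomogeneity-one functions does not automatically give coercivity: the volume weight $h^2f\,dt$ degenerates at the endpoints, and you would still need to control the interaction between the degenerate weight, the singular coefficients of the linearised operator, and the exponential nonlinearity. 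Similarly, the shooting argument you allude to requires matching regular-singular boundary data at \emph{both} endpoints simultaneously, and you give no mechanism (monotonicity, degree, intermediate-value) for achieving this. As written, the proposal is a plausible plan but contains a genuine gap at its core.

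The paper's proof is entirely different and completely explicit. Rather than starting from a K\"ahler $g_0$ and seeking an abstract conformal factor, the paper fixes from the outset the specific relation $h=\phi$, $f=-\tfrac12\phi'\phi$ (so $f=-\tfrac12 hh'$, hence the Lee form is $3\,d\log h$ and the metric is globally conformally K\"ahler). With this ansatz the equation $s^C=\lambda$ becomes a third-order ODE in $\phi$, and the substitution $\phi'=-\sqrt{y(\phi)}$ turns it into the \emph{linear} second-order equation
\[
-\tfrac12 y'' - \tfrac{2}{\phi}y' + \tfrac{2}{\phi^2}y \;=\; \lambda - \tfrac{4}{\phi^2},
\]
which is solved in closed form: $y(\phi)=\bigl(-\lambda\phi^6+3c_1\phi^5-6\phi^4+3c_2\bigr)/(3\phi^4)$. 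The boundary conditions at $\phi_0=\phi(0)$ and $\phi_1=\phi(l)$ become four algebraic equations in $\lambda,c_1,c_2,m$, and one checks by elementary polynomial analysis in $x=\phi_0/\phi_1$ that for every integer $m>0$ there exist $\phi_0>\phi_1>0$ solving them with $y>0$ on $(\phi_1,\phi_0)$; the resulting $\lambda$ is manifestly positive. Thus the paper bypasses all variational or shooting machinery: the key idea you are missing is the ansatz $f=-\tfrac12 hh'$, which linearises the problem and allows an explicit algebraic solution.
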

From the Chern connection, one can also derive the third scalar curvature $s$ (see Definition~\ref{def_scalar} or for instance~\cite{MR742896}). On a closed Hermitian manifold of real dimension $2n=4$, $s$ and $s^C$ coincide if and only the metric is K\"ahler~\cite{MR742896}. Also note that the conformal changes of $s^C$ and $s$ are similar~\cite{MR742896}.
It is then interesting to construct Hermitian metrics of positive constant third scalar curvature. We succeed to construct conformally K\"ahler metrics of positive constant third scalar scalar curvature of the form~(\ref{metric_intro}) on the $1$-Hirzebruch surface (see Theorem~\ref{third_constant}). 
In Section~{\ref{Sec_4}}, we use the generalized Calabi construction~\cite{MR2144249,MR2425136,MR2807093} on minimal ruled surfaces  to construct metrics of zero Chern scalar curvature on some of these surfaces
(see Proposition~\ref{Prop_zero_Chern}). Finally, in Section~\ref{Sec_5}, we discuss the existence in a conformal class of the critical metrics of the total Chern scalar curvature studied by Gauduchon in~\cite{MR567760,MR742896}.

\section{Preliminaries}\label{Sec_2}
Let $(M,J,g)$ be an almost-Hermitian manifold of real dimension $2n.$ Hence, $J:T(M)\longrightarrow T(M)$ is an almost-complex structure 
i.e. $J^2=-\mathrm{Id}|_{T(M)}$ and $g$ is a Riemannian metric compatible with the almost-complex structure $J$ i.e. $g(J\cdot,J\cdot)=g(\cdot,\cdot).$
The pair $(J,g)$ induces a $2$-form $F(\cdot,\cdot):=g(J\cdot,\cdot)$. The $2$-form $F$ is called the fundamental form and it is not necessarily closed. In fact,
$$dF=\left(dF\right)_0+\frac{1}{n-1}\theta\wedge F,$$
where $d$ is the exterior derivative, $\left(dF\right)_0$ is the primitive part and $\theta$ is $1$-form called the Lee form.
The metric $g$ is called Gauduchon if $\delta^g\theta=0,$ where $\delta^g$ is the codifferential defined as the adjoint of $d$. It turns out that
the conformal class of any almost-Hermitian metric $g$ contains a unique (up to a constant) Gauduchon metric~\cite{MR0470920}.
Now, the almost-Hermitian structure $(J,g)$ is called Hermitian
if $J$ is integrable that is equivalent to the vanishing of the Nijenhuis tensor $N$~\cite{MR0088770} defined as
$$4N(\cdot,\cdot)=[J\cdot,J\cdot]-[\cdot,\cdot]-J[J\cdot,\cdot]-J[\cdot,J\cdot].$$
A Hermitian structure $(J,g)$ is K\"ahler if the induced fundamental form if closed i.e. $dF=0.$

On an almost-Hermitian manifold $(M,J,g)$, the almost-complex $J$ is parallel with respect to the Levi-Civita connection $D^g$
if and only if $g$ is K\"ahler. It is natural then to consider the Chern connection $\nabla$~\cite{MR0066733,MR0165458,MR1456265} defined as the unique connection
satisfying $\nabla J=\nabla g=0$ and $T^\nabla_{JX,JY}=-T^\nabla_{X,Y},$ where $T^\nabla$ is the torsion of $\nabla$ defined
by $T^\nabla_{X,Y}=\nabla_XY-\nabla_YX-[X,Y],$ for any vector fields $X,Y.$ Moreover, The $(0,1)$-part of the Chern connection, with
respect to $J,$ corresponds to the canonical Cauchy--Riemann operator defined on the tangent bundle $T(M)$~\cite{MR1456265}.

Let $R^\nabla_{X,Y}=[\nabla_X,\nabla_Y]-\nabla_{[X,Y]}$ be the curvature of the Chern connection $\nabla.$
Then, the real $2$-form $\rho^\nabla(X,Y)=-\Lambda g\left(R^\nabla_{X,Y}\cdot,\cdot\right)$ is called the first (or Hermitian) Ricci form. Here, $\Lambda$
stands for the contraction by the fundamental form $F$ induced by $(J,g)$. The first Ricci form
$\rho^\nabla$ is a representative of the first Chern class $2\pi c_1\left(T(M),J\right),$ in particular $d\rho^\nabla=0$.
If $J$ is integrable then $\rho^\nabla$ is $J$-invariant. On the other hand, one can also define the second Ricci form $r$ by $r(X,Y)=-\Lambda g\left(R_{\cdot,\cdot}\, X,Y\right)$ (see for instance~\cite{MR742896}). 
\begin{defn}\label{def_scalar}
The Chern scalar curvature $s^C$ and the third scalar curvature $s$ are defined by
$$s^C=\Lambda\left(\rho^\nabla\right)=\Lambda\left(r\right)=-\frac{1}{4}\sum_{i,j=1}^nR^\nabla(e_i,Je_i,e_j,Je_j).$$
$$s=-\frac{1}{2}\sum_{i,j=1}^nR^\nabla(e_i,e_j,e_i,e_j),$$
where $\{e_1,\cdots,e_{2n}\}$ is a local $g$-orthonormal basis of $T(M)$ such that $e_{i+n}=Je_{i}.$
\end{defn}
On a closed almost-Hermitian manifold, the Chern scalar curvature $s^C$, the third scalar curvature $s$
and the Riemannian scalar curvature $s^g$ (with respect to the Levi-Civita connection $D^g$) do not coincide in general (see~\cite{MR742896,MR3632564,Lejmi:2017aa}). However, if $g$ is K\"ahler then $s^C=s=\frac{1}{2}s^g.$

\section{The Chern and the third scalar curvatures on the $m$-Hirzebruch surfaces}\label{Sec_3}

We consider the $U(2)$-invariant metric
\begin{equation}\label{metric}
g=h(t)^2\left(e^1\otimes e^1+e^2\otimes e^2 \right)+f(t)^2e^3\otimes e^3+dt^2,
\end{equation}
where $t$ is a coordinate transverse to the $U(2)$-orbits, and $e^1,e^2,e^3$ are the invariant $1$-forms on $S^3$ dual to the vectors $X,Y,V$ respectively
satisfying $[X,Y]=2V,[Y,V]=2X,[V,X]=2Y.$ It is well-known that Page~\cite{page1978compact} constructed a non-homogeneous Einstein metric of the form~(\ref{metric}) on $\mathbb{CP}^2\#\overline{\mathbb{CP}}^2$. The construction was then 
generalized by B\'erard-Bergery~\cite{MR727843} and used to provide for instance compact examples of Einstein-Weyl structures in~\cite{swann1993einstein} or $\ast$-Einstein metrics in~\cite{MR1723831} (see also~\cite{MR1246192})
or constant Riemannian scalar curvature metrics in~\cite{MR3263197}. 

On the $m$-Hirzebruch surface, $h(t)$ and $f(t)$ are positive functions on the interval $(0,l)$ and satisfy the following boundary conditions:
\begin{equation}\label{boundary_conditions}
f'(0)=-f'(l)=m,f^{(2p)}( 0)=f^{(2p)}( l)=0,\quad\forall p\geq 0.
\end{equation}
\begin{equation}\label{boundary_conditions_2}
h(0)>0, h(l)>0,h^{(2p+1)}(0)=h^{(2p+1)}(l)=0,\quad\forall p\geq 0.
\end{equation}
In particular, on $\mathbb{CP}^2\# \overline{\mathbb{CP}}^2$, we have $m=1.$

Let us consider the $g$-orthonormal basis $\{E_1=\frac{1}{h}X,E_2=\frac{1}{h}Y,E_3=\frac{1}{f}V,E_4=\frac{\partial}{\partial t}\}$ of the tangent bundle
and its dual $\{\sigma_1=h{e^1},\sigma_2=h{e^2},\sigma_3=f{e^3},\sigma_4=dt\}$ as a basis of the cotangent bundle.
A direct computation shows that:
$$[E_1,E_2]=\frac{2f}{h^2}E_3,\quad [E_1,E_3]=-\frac{2}{f}E_2,\quad [E_2,E_3]=\frac{2}{f}E_1,$$
$$ [E_1,E_4]=\frac{h'}{h}E_1,\quad[E_2,E_4]=\frac{h'}{h}E_2,\quad  \quad[E_3,E_4]=\frac{f'}{f}E_3.$$
We consider then the integrable almost-complex $J$ given by
\begin{equation}\label{complex_structure}
JE_1=E_2,\,\,JE_3=E_4.
\end{equation}
Then $J$ is compatible with the metric $g$ inducing the fundamental form
\begin{equation*}
F=\sigma_1\wedge \sigma_2+\sigma_3\wedge\sigma_4.
\end{equation*}
We compute then
\begin{eqnarray*}
dF&=&d(h^2\,e^1\wedge e^2+f\,e^3\wedge dt),\\
&=&2hh' \,dt\wedge e^1\wedge e^2+2f \, e^2\wedge e^1\wedge dt,\\
&=&\frac{2(hh'-f)}{h^2}\,dt\wedge\omega.
\end{eqnarray*}
The metric $g$ is then locally conformally K\"ahler. Indeed, the Lee form $\theta$ of $(J,g)$ is closed and it is given by
\begin{equation}\label{lee_form}
\theta=\frac{2(hh'-f)}{h^2}\,dt.
\end{equation}
In particular, $g$ is K\"ahler if and only if
\begin{equation}\label{K_condition}
f=hh'.
\end{equation}

\subsection{Chern curvature computations}

We denote by $\nabla$ the Chern connection of the Hermitian structure $(J,g).$ We would like to compute the curvature $R^\nabla$ of the Chern connection.
We recall the following formula~(\cite[Proposition 9.3.2]{gauduchonbook}) of the Chern connection $\nabla$: for any vector fields $X,Y,Z$, we have
\begin{eqnarray*}
g(\nabla_XZ,Y)&=&\frac{1}{2}X.\,g(Z,Y)+\frac{1}{2}JX.\,g(Z,JY)\\
&+&\frac{1}{4}g\left([X,Z]+[JX,JZ]+J[JX,Z]-J[X,JZ],Y \right)\\
&-&\frac{1}{4}g\left([X,Y]+[JX,JY]+J[JX,Y]-J[X,JY],Z \right)
\end{eqnarray*}
For the Hermitian structure $(J,g)$ given by~(\ref{complex_structure}) and~(\ref{metric}), we get
\medskip
{\large
\begin{center}
\renewcommand{\arraystretch}{2}{
\begin{tabular}{ |c|c|c|c|c| } 
\hline
$\nabla$ & $E_1$ & $E_2$ & $E_3$ & $E_4$ \\
\hline
$E_1$ & $\frac{-f}{h^2}E_4$ & $\frac{f}{h^2}E_3$   & $\frac{-f}{h^2}E_2$ & $\frac{f}{h^2}E_1$  \\
\hline
$E_2$ & $\frac{-f}{h^2}E_3$ & $\frac{-f}{h^2}E_4$ &  $\frac{f}{h^2}E_1$ & $\frac{f}{h^2}E_2$  \\
\hline
$E_3$ & $\frac{-fh'+2h}{fh}E_2$ & $\frac{fh'-2h}{fh}E_1$  & $\frac{-f'}{f}E_4$ & $\frac{f'}{f}E_3$ \\
\hline
$E_4$ & $0$ & $0$ & $0$  & $0$ \\
\hline
\end{tabular}}
\end{center}
}
\medskip

For instance $\nabla_{E_1}E_2=\frac{f}{h^2}E_3,$ etc. Now, for the second derivatives, we obtain
\medskip
{\large
\begin{center}
\renewcommand{\arraystretch}{2}{
\begin{tabular}{ |c|c|c|c|c| } 
\hline
$\nabla_{\bullet}\nabla_{\bullet} E_1$ & $E_1$ & $E_2$ & $E_3$ & $E_4$ \\
\hline
$E_1$ & $\frac{-f^2}{h^4}E_1$ & $\frac{f^2}{h^4}E_2$   &  $\frac{-fh'+2h}{h^3}E_3$ & $0$  \\
\hline
$E_2$ & $\frac{-f^2}{h^4}E_2$ & $\frac{-f^2}{h^4}E_1$ & $\frac{fh'-2h}{h^3}E_4$  &   $0$\\
\hline
$E_3$ & $\frac{-f'}{h^2}E_3$ & $\frac{f'}{h^2}E_4$  &  $-\frac{(fh'-2h)^2}{h^2f^2}E_1$   & $0$  \\
\hline
$E_4$ & $\left(\frac{-f}{h^2}\right)^{'}E_4$ & $\left(\frac{-f}{h^2}\right)^{'}E_3$ & $\left(\frac{-fh'+2h}{fh}\right)^{'}E_2$   & $0$ \\
\hline
\end{tabular}}
\end{center}
}
\medskip

{\large
\begin{center}
\renewcommand{\arraystretch}{2}{
\begin{tabular}{ |c|c|c|c|c| } 
\hline
$\nabla_{\bullet}\nabla_{\bullet} E_3$ & $E_1$ & $E_2$ & $E_3$ & $E_4$ \\
\hline
$E_1$   &  $\frac{-f^2}{h^4}E_3$  &  $\frac{-f^2}{h^4}E_4$  & $\frac{-f'}{h^2}E_1$   & $0$\\
\hline
$E_2$ & $\frac{f^2}{h^4}E_4$ & $\frac{-f^2}{h^4}E_3$& $\frac{-f'}{h^2}E_2$  & $0$\\
\hline
$E_3$ &  $\frac{-fh'+2h}{h^3}E_1$& $\frac{-fh'+2h}{h^3}E_2$ &$\frac{-f'^2}{f^2}E_3$  &  $0$\\
\hline
$E_4$ & $\left(\frac{-f}{h^2}\right)^{'} E_2$ & $\left(\frac{f}{h^2}\right)^{'} E_1$ & $-\left(\frac{f'}{f}\right)^{'} E_4$  & $0$ \\
\hline
\end{tabular}}
\end{center}
}
\medskip

For example $\nabla_{E_1}\nabla_{E_2} E_1=\frac{f^2}{h^4}E_2,$ etc. For the derivative with respect to the brackets, we have
\medskip
{\large
\begin{center}
\renewcommand{\arraystretch}{2}{
\begin{tabular}{ |c|c|c|c|c| } 
\hline
$\nabla_{[\bullet,\bullet]}E_1$ & $E_1$ & $E_2$ & $E_3$ & $E_4$ \\
\hline
$E_1$   & $0$  & $\frac{-2fh'+4h}{h^3}E_2$   & $\frac{2}{h^2}E_3$   & $\frac{-fh'}{h^3}E_4$ \\
\hline
$E_2$ &  $\frac{2fh'-4h}{h^3}E_2$ &$0$ & $\frac{-2}{h^2}E_4$  & $\frac{-fh'}{h^3}E_3$ \\
\hline
$E_3$ & $-\frac{2}{h^2}E_3$ & $\frac{2}{h^2}E_4$   & $0$&  $\frac{-f'(fh'-2h)}{hf^2}E_2$  \\
\hline
$E_4$ & $\frac{fh'}{h^3}E_4$& $\frac{fh'}{h^3}E_3$   &  $\frac{f'(fh'-2h)}{hf^2}E_2$ & $0$ \\
\hline
\end{tabular}}
\end{center}
}
\medskip
{\large
\begin{center}
\renewcommand{\arraystretch}{2}{
\begin{tabular}{ |c|c|c|c|c| } 
\hline
$\nabla_{[\bullet,\bullet]}E_3$ & $E_1$ & $E_2$ & $E_3$ & $E_4$ \\
\hline
$E_1$   & $0$  &  $\frac{-2f'}{h^2}E_4$ &  $\frac{-2}{h^2}E_1$  & $\frac{-fh'}{h^3}E_2$  \\
\hline
$E_2$ & $\frac{2f'}{h^2}E_4$ &$0$ & $\frac{-2}{h^2}E_2$  & $\frac{fh'}{h^3}E_1$ \\
\hline
$E_3$ & $\frac{2}{h^2}E_1$  & $\frac{2}{h^2}E_2$  & $0$& $\frac{2-f'^2}{f^2}E_4$    \\
\hline
$E_4$ &  $\frac{fh'}{h^3}E_2$ & $\frac{-fh'}{h^3}E_1$   & $\frac{-2+f'^2}{f^2}E_4$ & $0$ \\
\hline
\end{tabular}}
\end{center}
}
\medskip

For example $\nabla_{[E_1,E_2]}E_1=\frac{-2fh'+4h}{h^3}E_2,$ etc. Now, we compute the components of the curvature $R^\nabla$
\medskip
\newline
{\large
\renewcommand{\arraystretch}{2}{\hspace*{-6em}
\begin{tabular}{ |c|c|c|c|c| } 
\hline
$R^\nabla_{\bullet,\bullet}E_1$ & $E_1$ & $E_2$ & $E_3$ & $E_4$ \\
\hline
$E_1$   & $0$  & $\frac{2h'fh+2f^2-4h^2}{h^4}E_2$ & $\frac{f'h-fh'}{h^3}E_3$   & $\frac{f'h-h'f}{h^3}E_4$   \\
\hline
$E_2$ & $\frac{-2h'fh-2f^2+4h^2}{h^4}E_2$ & $0$ &  $\frac{-f'h+fh'}{h^3}E_4$ &$\frac{f'h-h'f}{h^3}E_3$  \\
\hline
$E_3$ & $\frac{-f'h+fh'}{h^3}E_3$ &  $\frac{f'h-fh'}{h^3}E_4$ & $0$&  $\left(\left( \frac{fh'-2h}{hf} \right)^{'}+\frac{f'(fh'-2h)}{f^2h}\right)E_2$   \\
\hline
$E_4$ &  $-\frac{f'h-h'f}{h^3}E_4$&  $-\frac{f'h-h'f}{h^3}E_3$ & $-\left(\left( \frac{fh'-2h}{hf} \right)^{'}+\frac{f'(fh'-2h)}{f^2h}\right)E_2$ & $0$ \\
\hline
\end{tabular}}
}
\medskip

%
%
%
%
%

{\large
\begin{center}
\renewcommand{\arraystretch}{2}{
\begin{tabular}{ |c|c|c|c|c| } 
\hline
$R^\nabla_{\bullet,\bullet}E_3$ & $E_1$ & $E_2$ & $E_3$ & $E_4$ \\
\hline
$E_1$   & $0$  & $\left(-\frac{2f^2}{h^4}+\frac{2f'}{h^2}\right)E_4$  &  $\frac{-f'h+fh'}{h^3}E_1$  &  $\frac{f'h-h'f}{h^3}E_2$  \\
\hline
$E_2$ & $\left(\frac{2f^2}{h^4}-\frac{2f'}{h^2}\right)E_4$ & $0$ & $\frac{-f'h+fh'}{h^3}   E_2$  & $-\frac{f'h-h'f}{h^3} E_1$ \\
\hline
$E_3$ &  $\frac{f'h-fh'}{h^3}E_1$  & $\frac{f'h-fh'}{h^3}   E_2$    & $0$& $\frac{f''}{f} E_4$    \\
\hline
$E_4$ &$-\frac{f'h-h'f}{h^3}E_2$ & $\frac{f'h-h'f}{h^3} E_1$  & $-\frac{f''}{f} E_4$  & $0$ \\
\hline
\end{tabular}}
\end{center}
}
\medskip

%
%
%
%
%
%
%
%
%

For instance $R^\nabla_{E_1,E_2}E_1=\frac{2h'fh+2f^2-4h^2}{h^4}E_2,$ etc. To get the corresponding tables of $E_2$ and $E_4$, we can deduce them easily using the fact that $\nabla J=0.$

Now, we compute the first (or the Hermitian) Ricci form $\rho^\nabla$ given by $$\rho^\nabla(X,Y)=-g(R^\nabla(X,Y)E_1,E_2)-g(R^\nabla(X,Y)E_3,E_4).$$
Then
$$\rho^\nabla(E_1,E_2)=\frac{4h^2-2f'h^2-2h'fh}{h^4},$$
$$\rho^\nabla(E_3,E_4)=\frac{h'^2f-h'f'h-h''fh-f''h^2}{h^2f},$$
and
$$\rho^\nabla(E_1,E_3)=\rho^\nabla(E_1,E_4)=\rho^\nabla(E_2,E_3)=\rho^\nabla(E_2,E_4)=0.$$
Hence,
$$\rho^\nabla=\frac{4h^2-2f'h^2-2h'fh}{h^4}\,\sigma_1\wedge\sigma_2+\frac{h'^2f-h'f'h-h''fh-f''h^2}{h^2f}\sigma_3\wedge\sigma_4.$$

Moreover, we can easily check that $d\rho^\nabla=0.$ In particular, 
\begin{equation}\label{closed_ricci}
\left(\frac{4h^2-2f'h^2-2h'fh}{h^2}\right)'=2\,\frac{h'^2f-h'f'h-h''fh-f''h^2}{h^2}.
\end{equation}
Remark that the equation $\rho^\nabla=\lambda\omega$ (where $\lambda$ is a constant) implies that $g$ is K\"ahler. In particular, we can not solve $\rho^\nabla=\lambda\omega$ on the
$m$-Hirzebruch surface. 

\begin{rem}
The second Ricci form is defined as follows
 $$r(X,Y)=-g(R^\nabla(E_1,E_2)X,Y)-g(R^\nabla(E_3,E_4)X,Y).$$
 We obtain that
 $$r=\frac{-h''fh^3-f'h'h^3+h'^2fh^2-2h'f^2h-2f^3+4h^2f}{h^4f}\,\sigma_1\wedge\sigma_2+\left(\frac{2f^2}{h^4}-2\frac{f'}{h^2}-\frac{f''}{f} \right)\sigma_3\wedge\sigma_4.$$
\end{rem}

In~\cite{MR571563}, Gauduchon introduced Einstein-Hermitian metrics which are by definition solutions to the equation $r=u\,\omega$ (for some function $u$). It turns out that, in complex dimension $2$, non-K\"ahler Einstein-Hermitian metrics only exist on a Hopf surface~\cite{MR1477631} (see also~\cite{MR1428883,Angella:aa} for more details).


From the above computations, it follows that:
\begin{thm}\label{chern}
The Chern scalar curvature $s^C$ of $(J,g)$ is given by
\begin{equation}\label{chern_scalar}
s^C=\frac{4hf-2f'fh-2h'f^2+h'^2hf-h'f'h^2-h''fh^2-f''h^3}{h^3f}.
\end{equation}
\end{thm}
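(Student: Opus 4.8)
The plan is to read $s^C$ off the identity $s^C=\Lambda(\rho^\nabla)$ of Definition~\ref{def_scalar}, using the expression for the first Ricci form $\rho^\nabla$ already displayed above. Since in the $g$-orthonormal coframe $\{\sigma_1,\sigma_2,\sigma_3,\sigma_4\}$ the fundamental form is $F=\sigma_1\wedge\sigma_2+\sigma_3\wedge\sigma_4$, the trace $\Lambda$ acts on a $J$-invariant $2$-form $\alpha=\alpha(E_1,E_2)\,\sigma_1\wedge\sigma_2+\alpha(E_3,E_4)\,\sigma_3\wedge\sigma_4$ simply by $\Lambda\alpha=\alpha(E_1,E_2)+\alpha(E_3,E_4)$ (indeed $\Lambda F=n=2$ with this normalization). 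Since $J$ is integrable, $\rho^\nabla$ is $J$-invariant with the off-diagonal components vanishing as recorded, so
\[
s^C=\rho^\nabla(E_1,E_2)+\rho^\nabla(E_3,E_4)=\frac{4h^2-2f'h^2-2h'fh}{h^4}+\frac{h'^2f-h'f'h-h''fh-f''h^2}{h^2f},
\]
and it remains only to bring this to a common denominator and simplify.

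For completeness I would also re-derive the two coefficients of $\rho^\nabla$. By definition $\rho^\nabla(X,Y)=-g(R^\nabla_{X,Y}E_1,E_2)-g(R^\nabla_{X,Y}E_3,E_4)$, so $\rho^\nabla(E_1,E_2)$ is minus the $E_2$-component of $R^\nabla_{E_1,E_2}E_1$ minus the $E_4$-component of $R^\nabla_{E_1,E_2}E_3$, while $\rho^\nabla(E_3,E_4)$ comes likewise from the $E_2$-component of $R^\nabla_{E_3,E_4}E_1$ and the $E_4$-component of $R^\nabla_{E_3,E_4}E_3$; these are exactly the four entries recorded in the two curvature tables. Substituting them and cancelling the $\pm\tfrac{2f^2}{h^4}$ terms (respectively using $\big(\tfrac{fh'-2h}{hf}\big)'+\tfrac{f'(fh'-2h)}{f^2h}=\tfrac{h''}{h}-\tfrac{h'^2}{h^2}+\tfrac{f'h'}{fh}$) produces precisely the displayed $\rho^\nabla(E_1,E_2)$ and $\rho^\nabla(E_3,E_4)$. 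Each curvature entry is itself assembled from the three connection tables via $R^\nabla_{X,Y}Z=\nabla_X\nabla_Y Z-\nabla_Y\nabla_X Z-\nabla_{[X,Y]}Z$, which is where the bulk of the work sits.

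Combining the two fractions over the common denominator $h^4f$, factoring a single $h$ out of the numerator and reducing gives
\[
s^C=\frac{4hf-2f'fh-2h'f^2+h'^2hf-h'f'h^2-h''fh^2-f''h^3}{h^3f},
\]
which is~\eqref{chern_scalar}. I do not anticipate any conceptual obstacle: the only genuine difficulty is the length of the computation and the attendant risk of sign errors while propagating the connection tables into the curvature tables and then into $\rho^\nabla$. A convenient internal check is that the resulting $\rho^\nabla$ is closed, which is automatic and takes the explicit form~\eqref{closed_ricci}; a further one is that imposing the K\"ahler condition~\eqref{K_condition}, $f=hh'$, should collapse the formula to one half of the Riemannian scalar curvature of the corresponding K\"ahler metric in the Page--B\'erard-Bergery family, in accordance with $s^C=\tfrac12 s^g$ from Section~\ref{Sec_2}.
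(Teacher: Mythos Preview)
Your proposal is correct and follows essentially the same route as the paper: the paper simply states that the formula ``follows from the above computations,'' meaning exactly what you do---add $\rho^\nabla(E_1,E_2)$ and $\rho^\nabla(E_3,E_4)$ from the displayed first Ricci form and clear denominators. Your intermediate identifications (the $\pm\tfrac{2f^2}{h^4}$ cancellation and the simplification $\big(\tfrac{fh'-2h}{hf}\big)'+\tfrac{f'(fh'-2h)}{f^2h}=\tfrac{h''}{h}-\tfrac{h'^2}{h^2}+\tfrac{f'h'}{fh}$) are accurate, and the suggested sanity checks via $d\rho^\nabla=0$ and the K\"ahler specialization are appropriate.
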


\begin{thm}\label{third}
The third scalar curvature $s$ of $(J,g)$ is given by
\begin{equation}\label{third_scalar}
s=\frac{-f''h^4-2f(-h'fh+2f'h^2+f^2-2h^2)}{h^4f}.
\end{equation}
\end{thm}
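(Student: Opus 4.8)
The plan is to read $s$ directly off Definition~\ref{def_scalar}, using the $g$-orthonormal frame $\{E_1,E_2,E_3,E_4\}$ and the curvature tables of $R^\nabla$ assembled above, exactly in the spirit of the preceding computation of $\rho^\nabla$ and of $s^C$. Since $2n=4$, the defining sum reads $s=-\frac{1}{2}\sum_{i,j=1}^{4}R^\nabla(E_i,E_j,E_i,E_j)$ with $R^\nabla(X,Y,Z,W):=g(R^\nabla(X,Y)Z,W)$. The four diagonal terms $i=j$ vanish because $R^\nabla$ is skew in its first two slots, so only the six unordered pairs $\{i,j\}$ contribute, and for each pair the two orderings give equal numbers: this follows from $\nabla J=0$, i.e. $R^\nabla(X,Y)JZ=JR^\nabla(X,Y)Z$, together with the $g$-skew-symmetry of $R^\nabla(X,Y)$. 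The same identity is what lets one extract the entries with $E_2$ or $E_4$ in the last slot from the $R^\nabla_{\bullet,\bullet}E_1$ and $R^\nabla_{\bullet,\bullet}E_3$ tables. Hence $s=-\sum_{\{i,j\}}R^\nabla(E_i,E_j,E_i,E_j)$, summing over the six pairs.

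I would then pull the six quantities from the tables: the pair $\{1,2\}$ gives $R^\nabla(E_1,E_2,E_1,E_2)=\frac{2h'fh+2f^2-4h^2}{h^4}$, the pair $\{3,4\}$ gives $R^\nabla(E_3,E_4,E_3,E_4)=\frac{f''}{f}$, and each of the four mixed pairs $\{1,3\},\{1,4\},\{2,3\},\{2,4\}$ gives $\frac{f'h-fh'}{h^3}$ (the two pairs involving $E_2$ again via $\nabla J=0$). This gives
$$s=-\left(\frac{2h'fh+2f^2-4h^2}{h^4}+\frac{f''}{f}+4\,\frac{f'h-fh'}{h^3}\right).$$

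The final step is to put the right-hand side over the common denominator $h^4f$ and simplify; collecting the numerator gives $-f''h^4+2h'f^2h-4f'h^2f-2f^3+4h^2f$, which is $-f''h^4-2f(-h'fh+2f'h^2+f^2-2h^2)$, i.e. precisely~(\ref{third_scalar}). None of this carries any analytic content; the main (and only) point requiring care — the natural place for a sign slip — is the bookkeeping: keeping the convention $R^\nabla(X,Y,Z,W)=g(R^\nabla(X,Y)Z,W)$ fixed throughout, using $\nabla J=0$ correctly whenever a slot is $E_2$ or $E_4$, and remembering that each unordered pair $\{i,j\}$ is counted twice in the defining sum. As a consistency check, one verifies that on the Kähler locus $f=hh'$ the expression~(\ref{third_scalar}) reduces to $s^C$ as given in~(\ref{chern_scalar}).
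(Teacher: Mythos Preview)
Your proposal is correct and is exactly the computation the paper has in mind: the authors simply write ``From the above computations, it follows that'' and state~(\ref{third_scalar}), leaving the reader to extract the six sectional terms $R^\nabla(E_i,E_j,E_i,E_j)$ from the curvature tables and sum, precisely as you do. One cosmetic remark: the equality $R^\nabla(E_i,E_j,E_i,E_j)=R^\nabla(E_j,E_i,E_j,E_i)$ already follows from skew-symmetry in the first pair together with the $g$-skew-symmetry in the last pair (i.e.\ from $\nabla g=0$), so you do not actually need $\nabla J=0$ for that step---you only need it, as you correctly note afterwards, to read off the entries involving $E_2$ and $E_4$.
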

\begin{rem}
The Riemannian scalar curvature $s^g$ of the metric $g$ is given by (see for example~\cite{MR727843,MR2371700})
$$s^g=-4\frac{h''}{h}-2\frac{f''}{f}-2\frac{h'^2}{h^2}-4\frac{h'f'}{hf}-2\frac{f^2}{h^4}+\frac{8}{h^2}.$$
\end{rem}

\subsection{Construction of Hermitian metrics of constant Chern scalar curvautre}
We would like now to construct constant Chern scalar curvature metrics on the $m$-Hirzebruch surface. 

Suppose that $f=-\frac{1}{2}\phi'\phi$ and $h=\phi$ for some positive decreasing function $\phi$ (so $f=-\frac{1}{2}h'h$).
It follows from Theorem~\ref{chern} that we need to solve
$$-\frac{\phi'''}{\phi'}-\frac{4}{\phi}\phi''+\frac{2}{\phi^2}\phi'^2+\frac{4}{\phi^2}=\lambda,$$
for some constant $\lambda.$

We introduce the function $y(\phi)$ defined by $\phi'=-\sqrt{y(\phi)}$, then $\phi''=\frac{1}{2}y'(\phi)$ and ${\phi'''}=\frac{1}{2}y''(\phi)\,\phi'$
(here $y'(\phi),y''(\phi)$ are derivatives with respect to $\phi$).
Then the equation becomes
$$-\frac{1}{2}y''-\frac{2}{\phi}y'+\frac{2}{\phi^2}y=\lambda-\frac{4}{\phi^2}.$$
The solution is given by
$$y(\phi)=\frac{-\lambda \phi^6+3c_1\phi^5-6\phi^4+3c_2}{3\phi^4},$$
for some constants $c_1$ and $c_2.$

Denote by $\phi(0)=\phi_0>0$ and $\phi(l)=\phi_1>0$. To satisfy the boundary conditions~(\ref{boundary_conditions}) we need then a solution such that $y(\phi_0)=y(\phi_1)=0$ and $y'(\phi_0)=-\frac{4m}{\phi_0}$
and $y'(\phi_1)=\frac{4m}{\phi_1}$ (hence $\phi^{(2p+1)}(0)=\phi^{(2p+1)}(l)=0$, for any $p\geq 0$ because ${\phi'''}=\frac{1}{2}y''(\phi)\,\phi'$ etc).

Then, 

%
%
%
%
\begin{equation}\label{chern_m}
m=\frac{-2\phi_0^4-\phi_0^3\phi_1+\phi_0\phi_1^3+2\phi_1^4}{\phi_0^4-2\phi_0^3\phi_1-3\phi_0^2\phi_1^2-2\phi_0\phi_1^3+\phi_1^4},
\end{equation}

$$\lambda=\frac{-6(3\phi_0^2+4\phi_0\phi_1+3\phi_1^2)}{\phi_0^4-2\phi_0^3\phi_1-3\phi_0^2\phi_1^2-2\phi_0\phi_1^3+\phi_1^4},$$

$$c_1=\frac{-4(\phi_0^3+3\phi_0^2\phi_1+3\phi_0\phi_1^2+\phi_1^3)}{\phi_0^4-2\phi_0^3\phi_1-3\phi_0^2\phi_1^2-2\phi_0\phi_1^3+\phi_1^4},$$

$$c_2=\frac{2\phi_0^4\phi_1^4}{\phi_0^4-2\phi_0^3\phi_1-3\phi_0^2\phi_1^2-2\phi_0\phi_1^3+\phi_1^4},$$

We claim that for any positive integer $m$, there exists $\phi_0>\phi_1>0$ such that~(\ref{chern_m}) is satisfied. Indeed, define $x=\frac{\phi_0}{\phi_1}$,
then Equation~(\ref{chern_m}) is equivalent to $$P(x)=(m+2)x^4-(2m-1)x^3-3mx^2-(2m+1)x+(m-2)=0.$$ Remark that $P(1)=-5m.$ Since $P(x)$
is a polynomial of even degree and $m>0$ there exists a solution of $P(x)=0$ such that $x>1.$
Thus for any positive integer $m$ there exists a solution $\phi_0>\phi_1>0$ of~(\ref{chern_m}). Therefore the function $\phi$ is positive
and decreasing on the interval $(0,l).$ 

Furthermore, we can verify that the function $y(\phi)$ is positive on the interval $(\phi_1,\phi_0)$ with $\phi_0>\phi_1>0$. Indeed,
\begin{equation*}
y(\phi)=\frac{2m(\phi_0-\phi)(\phi_1-\phi)\Phi(\phi)}{\phi^4(\phi_1+\phi_0)(\phi_1-\phi_0)(2\phi_0^2+2\phi_1^2+\phi_0\phi_1)},
\end{equation*}
where $$\Phi(\phi)=\phi^4(3\phi_0^2+4\phi_1\phi_0+3\phi_1^2)+\phi^3(\phi_0^3+\phi_0^2\phi_1+\phi_0\phi_1^2+\phi_1^3)+\phi^2(\phi_0^3\phi_1+\phi_0^2\phi_1^2+\phi_0\phi_1^3)+\phi(\phi_0^3\phi_1^2+\phi_0^2\phi_1^3)+\phi_0^3\phi_1^3.$$
We can then define the function $t(\phi)$ to be
\begin{equation}
t(\phi)=\int_{\phi_0}^\phi\,\frac{d\phi}{-\sqrt{y(\phi)}},
\end{equation}
with $\phi\in(\phi_1,\phi_0)$. We take then the function $\phi(t)$ to be the inverse function of $t(\phi)$ defined on the open interval $(0,a)$
where \[a=\lim_{\phi\to \phi_1} t(\phi).\]
The functions $f$ and $h$ are then solutions to the problem.

We remark that in the case of a solution we have $$\lambda=\frac{-6m(3\phi_0^2+4\phi_0\phi_1+3\phi_1^2)}{-2\phi_0^4-\phi_0^3\phi_1+\phi_0\phi_1^3+2\phi_1^4},$$
and since $\phi_0>\phi_1>0,$ the constant $\lambda$ has to be positive. Moreover, from~(\ref{lee_form}) the Lee form is $$\theta=3\,\frac{h'}{h}dt=3\,d\left(\ln(h)\right).$$
Hence, the metric is conformally K\"ahler. We deduce then Theorem~\ref{chern_constant}.

\subsection{Construction of Hermitian metrics of constant third scalar curvature}

Now, we would like to construct constant third scalar curvature metrics on the $m$-Hirzebruch surface of the form~(\ref{metric}).
Unfortunately, we only succeed in the case $m=1$ i.e. on $\mathbb{CP}^2\# \overline{\mathbb{CP}}^2$.

Suppose that $f=\frac{1}{4}\phi'$ and $h=\sqrt{\phi}$ for some function positive and increasing function $\phi$ (so $f=\frac{1}{2}hh'$).
From Theorem~\ref{third}, it follows that we have to solve
$$-\frac{\phi'''}{\phi'}-\frac{\phi''}{\phi}+\frac{1}{8\,\phi^2}\phi'^2+\frac{4}{\phi}=\lambda,$$
for some constant $\lambda.$

We introduce again the function $y(\phi)$ defined by $\phi'=\sqrt{y(\phi)}$, then $\phi''=\frac{1}{2}y'(\phi)$ and ${\phi'''}=\frac{1}{2}y''(\phi){\phi'}$
(here $y'(\phi),y''(\phi)$ are derivatives with respect to $\phi$).
Then the equation becomes
$$-\frac{1}{2}y''-\frac{1}{2\phi}y'+\frac{1}{8\,\phi^2}y=\lambda-\frac{4}{\phi}.$$
Then the solution is given by

$$y(\phi)=\frac{-8\lambda\phi^{\frac{5}{2}}+15c_1\phi+160\phi^{\frac{3}{2}}+15c_2}{15\sqrt{\phi}},$$
for some constants $c_1$ and $c_2.$ Denote by $\phi(0)=\phi_0>0$ and $\phi(l)=\phi_1>0$. We need then a solution such that $y(\phi_0)=y(\phi_1)=0$ and $y'(\phi_0)=8m$
and $y'(\phi_1)=-8m.$
Then,
\begin{equation}\label{third_m}
m=\frac{-8\phi_0^{\frac{3}{2}}+8\phi_1^{\frac{3}{2}}+6\sqrt{\phi_0}\phi_1-6\sqrt{\phi_1}\phi_0}    {6\phi_0^{\frac{3}{2}}+6\phi_1^{\frac{3}{2}}+9\sqrt{\phi_0}\phi_1+9\sqrt{\phi_1}\phi_0},
\end{equation}

$$\lambda=\frac{40} {2\phi_0+2\phi_1+\sqrt{\phi_1}\sqrt{\phi_0}},$$

$$c_1=\frac{-32\sqrt{\phi_0}\sqrt{\phi_1}(\phi_0+\phi_1+3\sqrt{\phi_1}\sqrt{\phi_0})}     {6\phi_0^{\frac{3}{2}}+6\phi_1^{\frac{3}{2}}+9\sqrt{\phi_0}\phi_1+9\sqrt{\phi_1}\phi_0},$$

$$c_2=\frac{-32\phi_0^{\frac{3}{2}}\phi_1^{\frac{3}{2}}}   {6\phi_0^{\frac{3}{2}}+6\phi_1^{\frac{3}{2}}+9\sqrt{\phi_0}\phi_1+9\sqrt{\phi_1}\phi_0}.$$

Define $x=\frac{\phi_1}{\phi_0}$ then Equation~(\ref{third_m}) becomes 
$$P(x)=(6m-8)x^{\frac{3}{2}}+(9m-6)x+(9m+6)x^{\frac{1}{2}}+(6m+8)=0.$$
Then, $P(1)=30m>0$ and if $m=1$ then we get the existence of a solution of $P(x)=0$ with $x>1$. Explicitly,
If $m=1,$ we get $$\phi_1=\left(\frac{(44+11\sqrt{5})^{\frac{1}{3}})}{2}+\frac{11}{  2(44+11\sqrt{5})^{\frac{1}{3}})  }+\frac{1}{2}\right)^2\phi_0,$$
and then $\lambda\approx \frac{1.11}{\phi_0},c_1\approx -6.52\,\sqrt{\phi_0}$ and $c_2\approx -3.55\,\phi_0^{\frac{3}{2}}.$
So when $m=1$, there exists a solution $\phi_1>\phi_0>0$ of~(\ref{third_m}) making $\phi$ a positive and an increasing function on the interval $(0,l).$ 

Moreover, we can easily verify that the function $y(\phi)$ is also positive
on the interval $(\phi_0,\phi_1).$ We can then define the function $t(\phi)$ to be
\begin{equation}
t(\phi)=\int_{\phi_0}^\phi\,\frac{d\phi}{\sqrt{y(\phi)}},
\end{equation}
with $\phi\in(\phi_0,\phi_1)$. We take then the function $\phi(t)$ to be the inverse function of $t(\phi)$ defined on the open interval $(0,a)$
where \[a=\lim_{\phi\to \phi_1} t(\phi).\]
Then, the functions $f$ and $h$ are solutions to the problem. We remark that the constant $\lambda\approx \frac{1.11}{\phi_0}$ is positive and that the Lee form is $\theta=d(\ln(h))$
and so $(J,g)$ is conformally K\"ahler and so we deduce the following

\begin{thm}\label{third_constant}
There exists on the $1$-Hirzebruch surface conformally K\"ahler metrics of positive constant third scalar scalar curvature of the form~(\ref{metric}).
\end{thm}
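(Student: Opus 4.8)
The plan is to run the same reduction used above for the Chern scalar curvature, now applied to formula~(\ref{third_scalar}) for $s$ from Theorem~\ref{third}. First I would impose the normalization $h=\sqrt{\phi}$ and $f=\tfrac{1}{4}\phi'$ for a positive, increasing function $\phi=\phi(t)$, so that $f=\tfrac{1}{2}hh'$; substituting into~(\ref{third_scalar}) and asking $s\equiv\lambda$ to be constant turns the problem into a third-order ODE for $\phi$. The decisive step is the substitution $\phi'=\sqrt{y(\phi)}$, under which $\phi''=\tfrac{1}{2}y'(\phi)$ and $\phi'''=\tfrac{1}{2}y''(\phi)\,\phi'$, so the equation collapses to the inhomogeneous \emph{linear} second-order Cauchy--Euler equation
\[
-\tfrac{1}{2} y''-\tfrac{1}{2\phi}y'+\tfrac{1}{8\phi^2}y=\lambda-\tfrac{4}{\phi}
\]
for $y$ as a function of $\phi$. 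Its homogeneous solutions are powers of $\phi$, and it admits a particular solution polynomial in $\sqrt{\phi}$, so one solves it explicitly as
\[
y(\phi)=\frac{-8\lambda\phi^{5/2}+15c_1\phi+160\phi^{3/2}+15c_2}{15\sqrt{\phi}}
\]
with three free constants $\lambda,c_1,c_2$.

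Next I would choose these constants so that $h$ and $f$ close up to a smooth metric on $\mathbb{M}_m$. With $\phi(0)=\phi_0$ and $\phi(l)=\phi_1$, the boundary conditions~(\ref{boundary_conditions})--(\ref{boundary_conditions_2}) amount to $y(\phi_0)=y(\phi_1)=0$ together with $y'(\phi_0)=8m$ and $y'(\phi_1)=-8m$: indeed $y(\phi_i)=0$ forces $\phi'$ to vanish at that endpoint, whence $f(0)=f(l)=0$ and $h(0)=\sqrt{\phi_0}>0$, $h(l)=\sqrt{\phi_1}>0$; differentiating $(\phi')^2=y(\phi)$ gives $\phi''=\tfrac{1}{2}y'(\phi)$, so $f'=\tfrac{1}{8}y'(\phi)$ and thus $f'(0)=m$, $f'(l)=-m$; and since $\phi$ solves the autonomous second-order equation $\phi''=\tfrac{1}{2}y'(\phi)$ with $\phi'$ vanishing at the endpoints, time-reversal symmetry makes $\phi$, hence $h=\sqrt{\phi}$, a smooth even function of $t$ at each endpoint, so all the higher odd-order conditions in~(\ref{boundary_conditions})--(\ref{boundary_conditions_2}) hold automatically. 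Solving these four equations determines $\lambda,c_1,c_2$ together with the compatibility relation~(\ref{third_m}) expressing $m$ rationally in $\sqrt{\phi_0},\sqrt{\phi_1}$; with $x=\phi_1/\phi_0$ it becomes $P(x)=(6m-8)x^{3/2}+(9m-6)x+(9m+6)x^{1/2}+(6m+8)=0$, that is, in the variable $u=\sqrt{x}$, the cubic $(6m-8)u^3+(9m-6)u^2+(9m+6)u+(6m+8)=0$.

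For $m=1$ one has $P(1)=30>0$ while the leading coefficient $6m-8=-2$ is negative, so the cubic tends to $-\infty$ and has a root $u_*>1$, hence $x_*=\phi_1/\phi_0>1$; solving the cubic by Cardano yields the explicit value of $\phi_1$ in terms of $\phi_0$ quoted above, together with $\lambda\approx 1.11/\phi_0>0$ and the corresponding $c_1,c_2$. Fixing any $\phi_0>0$ then pins down $\phi_1>\phi_0$ and all three constants. It remains to verify that with these values $y(\phi)>0$ on the open interval $(\phi_0,\phi_1)$, which makes $\phi'=\sqrt{y(\phi)}>0$ well-defined, so that one may set $t(\phi)=\int_{\phi_0}^{\phi}d\phi/\sqrt{y(\phi)}$ --- an integral converging at both endpoints because $y$ vanishes there to first order --- put $a=\lim_{\phi\to\phi_1}t(\phi)<\infty$, and define $\phi(t)$ on $(0,a)$ as the inverse function. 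This produces positive functions $h=\sqrt{\phi}$, $f=\tfrac{1}{4}\phi'$ of the form~(\ref{metric}) on $\mathbb{M}_1=\mathbb{CP}^2\#\overline{\mathbb{CP}}^2$ with $s\equiv\lambda>0$; and since $f=\tfrac{1}{2}hh'$, formula~(\ref{lee_form}) gives the exact Lee form $\theta=(h'/h)\,dt=d(\ln h)$, so the metric is globally conformally K\"ahler, which proves Theorem~\ref{third_constant}.

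I expect the main obstacle to be this last existence step: checking that the root $u_*$ produced for $m=1$ really gives admissible data, i.e., that $y(\phi)$ stays positive on all of $(\phi_0,\phi_1)$ and that $t(\phi)$ behaves as claimed at the endpoints, so that the inversion defining $\phi(t)$ and the smooth extension across the collapsing orbits go through. This is also exactly where $m=1$ is forced: for $m\geq 2$ every coefficient of the cubic is positive, so it has no positive root, and the construction of constant third scalar curvature metrics of the form~(\ref{metric}) on $\mathbb{M}_m$ genuinely stops at $m=1$.
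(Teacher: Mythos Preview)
Your proposal is correct and follows essentially the same route as the paper: the same ansatz $h=\sqrt{\phi}$, $f=\tfrac14\phi'$, the same Cauchy--Euler reduction via $\phi'=\sqrt{y(\phi)}$, the same boundary conditions $y(\phi_i)=0$, $y'(\phi_0)=8m$, $y'(\phi_1)=-8m$, and the same cubic in $\sqrt{x}$ whose root for $m=1$ yields the metric. Your write-up even adds a couple of clarifications the paper leaves implicit --- the time-reversal argument for the higher-order boundary conditions and the observation that for $m\geq 2$ every coefficient of the cubic is positive, so no positive root exists.
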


\section{Conformally K\"ahler Metrics on ruled surfaces with zero Chern scalar curvature}\label{Sec_4}
In this section we first recall the well-known construction of \textit{admissible K\"ahler metrics} on minimal ruled surfaces~\cite{MR2144249,MR2425136,MR2807093}. Then, by an appropriate conformal rescaling, we will show existence of metrics of zero Chern scalar curvature on some of these surfaces.

\subsection{Admissible K\"ahler Metrics}
Let $\mathbb{M}_m^\mathfrak{g}$ be a ruled surface of the form  $\mathbb{P}(\mathcal{O}_{\Sigma_{\mathfrak g}} \oplus \mathcal L_m  )\longrightarrow \Sigma_{\mathfrak g}$, where $\Sigma_{\mathfrak g}$ is a compact Riemann Surface of genus $\mathfrak{g}$, $\mathcal{L}_m$ is a holomorphic line bundle of degree $m>0$ on $\Sigma_{\mathfrak g}$, and $\mathcal{O}_{\Sigma_{\mathfrak g}}$ is the trivial line bundle. Apostolov--Calderbank--Gauduchon--T\o nnesen-Friedman \cite{MR2425136} called these surfaces \textit{admissible ruled surfaces}, and constructed explicit K\"ahler metrics in every K\"ahler class, called \textit{admissible K\"ahler metrics}\footnote{We note that when the genus of the base $\Sigma_{\mathfrak g}$ is $\mathfrak{g}=0$, we exactly get the Hirzebruch surfaces $\mathbb{M}_m$}. 

The construction can be summarized as follows. Let $g_\Sigma$ be the K\"ahler metric on $\Sigma_{\mathfrak g}$ with constant scalar curvature $2s_\Sigma$ and K\"ahler form $\omega_\Sigma$ such that $c_1(\mathcal L_m)=\left[\frac{\omega_\Sigma}{2\pi}\right]$. It follows that $s_\Sigma = \frac{\chi_\Sigma}{m} = \frac{2-2\mathfrak{g}}{m}$. Note that $s_\Sigma\leq 2$. 

The natural $\mathbb C^*$-action on $\mathcal{L}_m$ extends to a  $\mathbb C^*$-action  on $\mathbb M_m^{\mathfrak g}$. The moment map for the circle action $\z:\mathbb M_m^{\mathfrak g} \rightarrow [-1,1]$ is a smooth function such that the preimages $\mathfrak{z}^{-1}(\pm 1)$ correspond to zero and infinity sections of $\mathbb M_m^{\mathfrak g}$. Let $\mathring{\mathbb M_m^{\mathfrak g}}$ be the complement of these sections. 

Let $x$ be a real parameter in $(0,1)$, and $F(\mathfrak{z}):(-1,1)\rightarrow\mathbb R$  be  a positive smooth function. An \textit{admissible K\"ahler metric} on $\mathring{\mathbb M_m^{\mathfrak g}}$ is then defined by 
\begin{equation}\label{admK}
	g=\frac{1+x\mathfrak{z}}{x} g_\Sigma + \frac{1+x\mathfrak{z}}{F(\z)} d\mathfrak{z}^2 + \frac{F(\z)}{1+x\mathfrak{z}} \theta^2
\end{equation}
where $\theta$ is a connection 1-form for a Hermitian metric with curvature $d\theta =\omega_\Sigma$. The boundary conditions for $g$ to be completed to a smooth metric on the entire $\mathbb M_m^{\mathfrak g}$ are
\begin{equation}\label{adm.K.BC}
	\textnormal{(i) } F(\z)>0,\quad -1<\z<1, \quad \textnormal{(ii) } F(\pm 1)=0, \quad\textnormal{(ii) } F'(\pm 1)=\mp 2(1\pm x).
\end{equation} 
This metric turns out to be K\"ahler with respect to the complex structure $J$ given by the pullback from the base together with the assumption $Jd\z = \frac{F(\z)}{1+x\z} \theta$.

The K\"ahler class can easily be computed as 
$[\omega] = 4\pi E_0 + \frac{2\pi(1-x) m}{x} C$ where $E_0$ and $C$ are the Poincar\'e duals of the zero-section and fiber, respectively \cite{MR2425136}. On the other hand, the K\"ahler cone is described by $\{aE_0 + bC: a,b>0 \}$, and thus, we see that every K\"ahler class (up to scaling) can be represented by an admissible K\"ahler metric as $x$ ranges from $0$ to $1$.

For admissible metrics $g$ we have the following \cite{MR2228318}:
\begin{itemize}
	\item the scalar curvature is 
	\begin{equation}\label{adm.K.scalar}
		s^g = \frac{2s_\Sigma x}{1+x\z } -\frac{F''(\z) }{1+x\z}
	\end{equation}
	\item the Laplacian of a smooth function
	 $p(\z)$ 
	 \begin{equation}\label{adm.K.laplacian}
	 	\Delta^g p = -\frac{[F(\z)p'(z)]'}{(1+x\z)}
	 \end{equation}
\end{itemize}

\subsection{Construction of metrics with zero Chern scalar curvature in the conformal class}
Let $g$ be a K\"ahler metric, and consider the conformally related metric $\tilde g:=e^{2f}g$ for some smooth function $f$. Then, the Chern scalar curvature $\tilde s^C$ of $\tilde{g}$ and the Riemannian scalar curvature $s^g$ of $g$  are related by the identity~\cite{MR742896}
\begin{equation}\label{chern.conf.change}
e^{2f} \tilde s^C = \frac{s^g}{2} + 2 \Delta^g f.
\end{equation}
Now, assume that $g$ is an admissible K\"ahler metric. We are going to consider a special conformal factor  $e^{2f}=\frac{1}{(z+b)^2}$, $|b|>1$, so that $f=-\ln(\z + b)$. This conformal factor is a \textit{holomorphy potential}, and it was used earlier in the context of the study of \textit{conformally K\"ahler Einstein-Maxwell metrics} on admissible ruled surfaces \cite{MR3521556}.

From \eqref{chern.conf.change}, \eqref{adm.K.scalar}, and \eqref{adm.K.laplacian} we compute
\begin{equation}\label{conf.adm.K.chern}
	\tilde s^C = \frac{1}{ 2 (1+x\z)} \left(2s_\Sigma x (\z + b)^2  - (\z+b)^2 F''(\z) + 4 (\z+b) F'(\z) -4F(\z)\right).
\end{equation}
To simplify calculations, we assume that $F(\z)$ is a quartic polynomial. The boundary conditions \eqref{adm.K.BC} (ii)--(iii) imply that $F$ must be of the form
\begin{equation}\label{quartic}
	F(\z) = (1-\z^2) ((1+x\z) -c (1-z^2))
\end{equation}
for some constant $c$. After re-writing \eqref{conf.adm.K.chern} as 
\begin{equation}\label{conf.adm.K.chern.new}
   (\z+b)^2 F''(\z) - 4 (\z+b) F'(\z) +4F(\z) = 2s_\Sigma x (\z + b)^2 - 2 (1+x\z)\tilde s^C,
\end{equation}
and plugging \eqref{quartic} into \eqref{conf.adm.K.chern.new}, we compare the coefficients of powers of $\z^k$, $k=0,1,2,3$, and get the following system.
\begin{align} \label{ls1}
2x - 8bc &= 0\\	\label{ls2}
-12b^2 c - 4c +2 &=2s_\Sigma x \\\label{ls3}
-6b^2 x - 8bc + 4b & = 4s_\Sigma b x - 2\tilde s^C x\\\label{ls4}
4b^2 c - 2b^2 -4bx -4c +4 &=2s_\Sigma b^2 x -2\tilde s^C.
\end{align}
If we regard $b$ as constant, this is a linear system of equations in unknowns $x,c,s_\Sigma,\tilde s^C$, and the system has a unique solution
$$ \tilde s^C=0,\quad  x= \frac{4b}{3b^2 +1},\quad s_\Sigma = -\frac{3b^2 +1}{4b} = -\frac{1}{x},\quad c=\frac{1}{3b^2 +1}.$$ 
We observe that
\begin{itemize}
	\item $0<x<1$ if and only if $b>1$. 
	\item $s_\Sigma=\frac{2-2\mathfrak g}{m}<-1$, and hence $\mathfrak g\geq 2$ and $m\leq 2\mathfrak g-2$.
	\item $F(\z)$ can be factored as 
	\begin{equation}\label{quartic.new}
		F(\z) = \frac{1}{3b^2 +1}(1-\z^2) (\z+3b)(z+b).
	\end{equation} Thus, $b>1$ implies that $F(\z)$ is positive on $(-1,1)$, thereby satisfying condition  (i) of  \eqref{adm.K.BC}.
\end{itemize} 

These observations can be summarized as the following proposition.
\begin{prop}\label{Prop_zero_Chern}	
	Consider the admissible ruled surface $\mathbb{M}_{m}^{\mathfrak g}$ with genus $\mathfrak g\geq 2 $ and degree $m\leq 2\mathfrak g -2$. Let $x: = \frac{m}{2-2 \mathfrak g}$, and let  $b>1$ is the unique number satisfying $x=\frac{4b}{3b^2+1}$. Then, the admissible K\"ahler metric \eqref{admK} corresponding to the parameter $x$, and $F$ given as in \eqref{quartic.new}, is conformal to a metric with zero Chern scalar curvature, via the conformal factor $\frac{1}{(\z+b)^2}$.
\end{prop}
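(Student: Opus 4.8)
The plan is to run the reverse-engineering computation set up in the paragraphs above and then confirm that the algebraic data it outputs genuinely defines a smooth admissible metric on the compact surface. Fix $b$ with $|b|>1$; then $\z+b$ is nowhere zero on $[-1,1]$, so $e^{2f}=\tfrac{1}{(\z+b)^2}$ is a smooth positive function on all of $\mathbb M_m^{\mathfrak g}$ and $\tilde g=e^{2f}g$ is a genuine metric there. Starting from an admissible K\"ahler metric $g$ with parameter $x\in(0,1)$ and an undetermined momentum profile $F$, I would substitute the admissible scalar curvature \eqref{adm.K.scalar} and the admissible Laplacian \eqref{adm.K.laplacian} evaluated at $f=-\ln(\z+b)$ into the conformal change formula \eqref{chern.conf.change}. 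This produces the closed expression \eqref{conf.adm.K.chern} for $\tilde s^C$, and demanding that $\tilde s^C$ be a constant turns it into the polynomial identity \eqref{conf.adm.K.chern.new} in $\z$.

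Next I would impose the quartic ansatz on $F$: the smoothness boundary conditions \eqref{adm.K.BC}(ii)--(iii) determine such an $F$ up to a single constant $c$, namely the form \eqref{quartic}. Substituting \eqref{quartic} into \eqref{conf.adm.K.chern.new} and comparing the coefficients of $\z^0,\z^1,\z^2,\z^3$ (the $\z^4$ coefficient of the left-hand side cancels identically, which is why there is no fifth equation) yields the system \eqref{ls1}--\eqref{ls4} in $x,c,s_\Sigma,\tilde s^C$, with $b$ regarded as a parameter; eliminating successively through \eqref{ls1}, \eqref{ls2}, \eqref{ls3}, \eqref{ls4} gives the unique solution
$$\tilde s^C=0,\qquad x=\frac{4b}{3b^2+1},\qquad s_\Sigma=-\frac{3b^2+1}{4b}=-\frac1x,\qquad c=\frac1{3b^2+1}.$$
In particular $\tilde s^C$ is \emph{forced} to vanish, which is the content of the proposition.

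Finally I would verify that this data is geometrically valid. The map $b\mapsto x=\tfrac{4b}{3b^2+1}$ has derivative $\tfrac{4-12b^2}{(3b^2+1)^2}<0$ on $(1,\infty)$, with $x\to 1$ as $b\to 1^+$ and $x\to 0$ as $b\to\infty$, so it restricts to a decreasing bijection $(1,\infty)\to(0,1)$; this justifies ``the unique $b>1$'' in the statement and yields $0<x<1$ automatically. Since $s_\Sigma=-1/x<-1$ while $s_\Sigma=\tfrac{2-2\mathfrak g}{m}$ is fixed by the base, one gets $\mathfrak g\ge 2$ and $m\le 2\mathfrak g-2$ as in the hypotheses, and the surface then determines $x=-1/s_\Sigma$ and hence $b$. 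Substituting the values found for $x$ and $c$ back into \eqref{quartic} and factoring the quadratic factor, $(1+x\z)-c(1-\z^2)=\tfrac{1}{3b^2+1}(\z+b)(\z+3b)$, recovers \eqref{quartic.new}; since $b>1$, each of $1-\z^2$, $\z+b$, $\z+3b$ is positive on $(-1,1)$, so $F>0$ there and condition \eqref{adm.K.BC}(i) holds. Thus \eqref{admK} with these parameters is a bona fide K\"ahler metric on $\mathbb M_m^{\mathfrak g}$, its conformal rescaling $\tfrac{1}{(\z+b)^2}g$ extends smoothly across the two sections, and its Chern scalar curvature vanishes on $\mathring{\mathbb M_m^{\mathfrak g}}$, hence everywhere by continuity. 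The only point that is more than routine algebra is that the system \eqref{ls1}--\eqref{ls4} superficially over-determines $x$ (once through the base via $s_\Sigma$, once through $b$) yet is consistent with its solution landing in the admissible ranges $0<x<1$ and $F>0$; this works out precisely because the quartic ansatz carries exactly the one extra constant $c$ needed to absorb the compatibility, and it is this rigidity that pins $\tilde s^C$ to $0$ rather than to an arbitrary constant.
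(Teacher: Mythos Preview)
Your proposal is correct and follows essentially the same approach as the paper: set up the conformal change formula with the holomorphy-potential factor, impose the quartic ansatz \eqref{quartic}, solve the resulting linear system \eqref{ls1}--\eqref{ls4}, and then check that the output lands in the admissible range ($b>1$, $0<x<1$, $F>0$). You add a few details the paper leaves implicit---the monotonicity of $b\mapsto x$ on $(1,\infty)$, the reason there is no $\z^4$ equation, and the smooth extension across the two sections---but the structure and key steps are identical.
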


\subsection{Numerical examples }
With the aid of a computer we obtain further examples of constant-Chern-scalar-curvature metrics on ruled surfaces. For a general smooth function $F(\z)$, the ODE in \eqref{conf.adm.K.chern.new} 
\begin{equation*}
(\z+b)^2 F''(\z) - 4 (\z+b) F'(\z) +4F(\z) = 2s_\Sigma x (\z + b)^2 - 2 (1+x\z)\tilde s^C
\end{equation*}
is an Euler equation, which has the general solution 
\begin{equation}\label{genus1F}
	F(\z) = c_1 (\z+b)^4 + c_2 (\z+b) + y_p,
\end{equation}
where a particular solution $y_p$ can be found easily using Maple as 
$$y_p=\frac{2}{3} x\tilde s^C(\z + b)\ln(\z + b) + \frac{1}{18}x\Big((13b + 4\z)\tilde s^C - 6s_\Sigma(b + 3\z)(\z + b)\Big) - \frac{1}{2}\tilde s^C$$

If we impose the boundary conditions \eqref{adm.K.BC} (ii) and (iii) on $F(\z)$, we obtain a nonlinear system of four equations in unknowns $c_1$, $c_2$,  $\tilde s^C$, $s_\Sigma$, $x$, and $b$. Maple can solve the system algebraically in terms of $x$ and $b$. 
{\tiny
\begin{align*}
	\tilde s^C &=\frac{1}{A} \left(-6(b^2-1)(3b^2x-4b+x)\right)\\
	s_\Sigma &= \frac{1}{xA}\left(-3x(b^2-1)(b^3-3b^2x+3b-x) \ln\left(\frac{b-1}{b+1}\right) + (18 b^3 + 6b)x^2 + (-6b^4 - 26b^2 - 4)x + 12b\right) \\
	c_1 & = \frac{1}{A}\left(x((b^3 - b^2x - b + x)\ln(-1 + b) + (-b^3 + b^2x + b - x)\ln(1 + b) + 2b^2 - 3bx + 1)\right) \\
	c_2 &= \frac{1}{3A} \left(6x(b^2-1)(b^3x + 3b^2x + (-x - 4)b + x)\ln(-1 + b) - \ln(1 + b)(b^3x - 3b^2x + (-x + 4)b - x) \right. \\ &\qquad \qquad \left.+ (48b^4 - 20b^2 - 4)x^2 + (-84b^3 + 12b)x + 48b^2 \right) 
\end{align*}}%
where $A=3bx(b^2-1)^2 \ln\left(\frac{b-1}{b+1}\right)+6b^4 x -16b^2x +12b-2x $.
Unfortunately, the solution seems to be too complicated to describe all $b$ and $x$ that lead to a well-defined metric. Nevertheless, fixing the genus $\mathfrak g$, the degree $m$, and $b$, we get some numerical solutions. 

\begin{tabular}{|r|r|r|r|r|r|r|r|}
	\hline 
	$\mathfrak g$ & $m$ & $s_\Sigma$  & $b$ & $x$\hspace{10pt} & $c_1$ \hspace{10pt} & $c_2$ \hspace{10pt} & $\tilde s^C$ \hspace{10pt} \\ \hline
	1 & $>0$ & 0 & 2 &  0.45128 & $-0.04980$ &$-0.13414$ &3.56671\\
	\hline
	2 & 1 & $-2$ & 2 & 0.63961 & $-0.08279$ & 0.81380 & $-1.08112$ \\
	\hline
	2 & 1 & $-2$ & 3 & 0.41604 & $-0.03399$ & $-0.24254$ & 1.48367 \\
	\hline
\end{tabular}

For these three examples, we can also check the positivity condition  (i) of \eqref{adm.K.BC} is satisfied, by graphing $F(\z)$ on Maple. 
\begin{center}
	\includegraphics[scale=0.8]{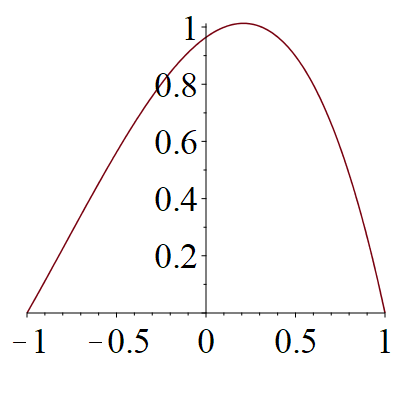} 	\includegraphics[scale=0.8]{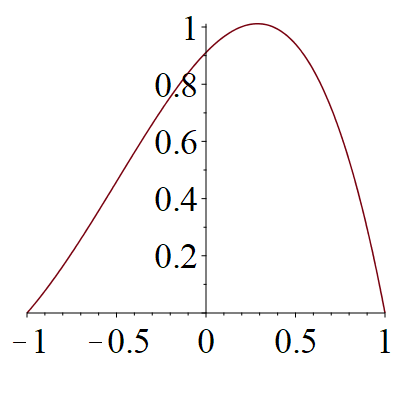}	\includegraphics[scale=0.8]{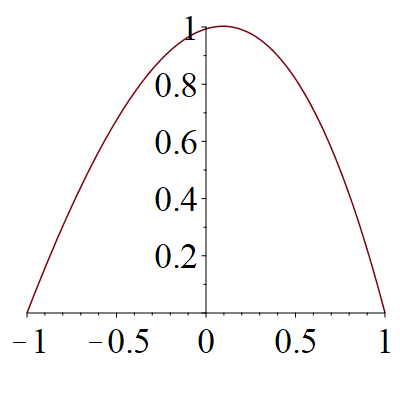}
\end{center}

 We get many such examples by varying the parameters $\mathfrak g$, $m$, and $b$. The metrics are obtained by first considering the admissible K\"ahler metric associated to $F(\z)$ in \eqref{genus1F}, and then conformally rescaling this K\"ahler metric by $\frac{1}{(z+b)^2}$. We would like to remark that the sign of the constant Chern scalar curvature metrics we obtained on ruled surfaces over $\Sigma_{\mathfrak g}$ of genus $\mathfrak g  \geq 2$ can be positive, negative, or zero. Also, when $\mathfrak g =1$, $x$ is always in $(0,1)$ whenever $b\geq 2$.
 
 We hope that these numerical examples can lead to an interesting theory of such metrics on higher genus ruled surfaces.
 
 \section{Gauduchon critical metrics}~\label{Sec_5}
On a closed complex $(M,J)$ of real dimension $2n$, Gauduchon studied in~\cite{MR567760,MR742896} the critical metrics of the total Chern scalar curvature and the third scalar curvature
$$\mathcal{C}(g)=\displaystyle\int_Ms^Cv_g,\quad\mathcal{S}(g)=\displaystyle\int_Msv_g,$$
in the space of all Hermitian metrics and also in a conformal class. 

When restricted to a conformal class with fixed total volume, the critical
points of these functionals are Hermitian metrics $g$ such that respectively 
$$s^C+\frac{n}{2(n-1)}\delta^g\theta,\quad s+\frac{1}{2(n-1)}\delta^g\theta$$ are constant~\cite{MR742896}. We remark that these functionals can be extended to the almost-Hermitian setting
and then we will have the same Euler-Lagrange equations because the conformal changes of the Chern scalar curvature and the third scalar curvature are the same as in the integrable case~\cite[Corollary 4.5]{Lejmi:2017aa}

First, we would like to construct metrics of the form~(\ref{metric}) with $s^C+\frac{n}{2(n-1)}\delta^g\theta=s^C+\delta^g\theta$ being constant.
Let $(J,g)$ be the Hermitian strcuture given by~(\ref{complex_structure}) and~(\ref{metric}). Then, we have 
\begin{equation}\label{gauduchon_critical}
s^C+\delta^g\theta=\frac{-3h''fh^2-f''h^3-h'^2fh-3h'f'h^2-2h'f^2+2f'fh+4fh}{h^3f}.
\end{equation}
\begin{thm}
There exists on the $1$-Hirzebruch surface conformal K\"ahler metrics of the form~(\ref{metric}) with $s^C+\delta^g\theta$ being a positive constant.
\end{thm}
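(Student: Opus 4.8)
The plan is to imitate the proofs of Theorems~\ref{chern_constant} and~\ref{third_constant}, reducing the equation $s^C+\delta^g\theta=\lambda$ to an explicitly solvable ODE. Since here $2n=4$ one has $\frac{n}{2(n-1)}=1$, so by~(\ref{gauduchon_critical}) the Euler--Lagrange quantity is exactly the rational expression in $h,f$ and their derivatives displayed there. First I would impose a one-function ansatz $f=c\,hh'$ for a suitable constant $c\neq 1$ (for instance $c=-\tfrac12$ with $h=\phi$, as in the proof of Theorem~\ref{chern_constant}, or $c=\tfrac12$ with $h=\sqrt{\phi}$, as in the proof of Theorem~\ref{third_constant}), where $\phi$ is a positive monotone function on $(0,l)$. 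By~(\ref{lee_form}) this gives $\theta=2(1-c)\,d(\ln h)$, an exact form, so the resulting metric is non-K\"ahler (as $c\neq 1$) and conformally K\"ahler. Substituting into~(\ref{gauduchon_critical}), the condition $s^C+\delta^g\theta=\lambda$ becomes a third-order ODE for $\phi$.

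As in Section~\ref{Sec_3}, I would then set $\phi'=\pm\sqrt{y(\phi)}$, so that $\phi''=\tfrac12 y'(\phi)$ and $\phi'''=\tfrac12 y''(\phi)\,\phi'$; the powers of $\phi'$ cancel and the ODE collapses to a linear second-order Euler equation for $y(\phi)$ of the shape $-\tfrac12 y''-\tfrac{\alpha}{\phi}y'+\tfrac{\beta}{\phi^2}y=\lambda-\tfrac{\gamma}{\phi^{\,k}}$ with $k\in\{1,2\}$, whose general solution is an explicit rational (or fractional-power) function of $\phi$ involving $\lambda$ and two integration constants $c_1,c_2$. Writing $\phi(0)=\phi_0$ and $\phi(l)=\phi_1$, the boundary conditions~(\ref{boundary_conditions}) for $m=1$ translate into $y(\phi_0)=y(\phi_1)=0$ together with prescribed endpoint values of $y'$ proportional to $m=1$ (which also force $\phi^{(2p+1)}(0)=\phi^{(2p+1)}(l)=0$ for all $p\geq 0$). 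This is a system of four equations; solving it expresses $\lambda,c_1,c_2$ in terms of $\phi_0,\phi_1$ and leaves one algebraic compatibility relation, which after setting $x$ equal to the ratio $\phi_1/\phi_0$ (or of their square roots) takes the form $P(x)=0$ for an explicit function $P$.

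It then remains to check the analytic conditions, and this is where I expect the real work to lie. One must show that for $m=1$ the equation $P(x)=0$ has an admissible root $x>1$ — likely by evaluating $P$ at $x=1$ and comparing with its behaviour as $x\to\infty$, or by exhibiting the root via radicals as in the proof of Theorem~\ref{third_constant} — and that the corresponding $y(\phi)$ is strictly positive on the open interval between $\phi_0$ and $\phi_1$; these two points are presumably why the statement is restricted to the $1$-Hirzebruch surface. Granting this, one sets $t(\phi)=\int_{\phi_0}^{\phi}\frac{d\phi}{\pm\sqrt{y(\phi)}}$, inverts to obtain $\phi(t)$ on $(0,a)$ with $a=\lim_{\phi\to\phi_1}t(\phi)$, and recovers $f$ and $h$ from the ansatz; they satisfy all the boundary conditions~(\ref{boundary_conditions})--(\ref{boundary_conditions_2}), the metric $(J,g)$ is conformally K\"ahler since $\theta$ is exact, $s^C+\delta^g\theta\equiv\lambda$ by construction, and one reads off from the explicit formula for $\lambda$ that it is positive, completing the proof. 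The sign analysis of $P$ near $x=1$ together with the positivity of $y$ on $(\phi_0,\phi_1)$ is the main obstacle.
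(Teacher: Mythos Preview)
Your plan is exactly the paper's strategy: impose a one-function ansatz $f=c\,hh'$ with $h=\phi$, substitute into~(\ref{gauduchon_critical}), reduce via $\phi'=\pm\sqrt{y(\phi)}$ to a linear Euler equation for $y$, solve explicitly, and match the four boundary data. Two points of comparison are worth noting.

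First, the constant $c$: the paper does not use either of your sample values $c=\pm\tfrac12$ but rather the unmotivated choice $c=-\tfrac75$ (so $f=-\tfrac75\phi'\phi$, $h=\phi$, and $\theta=\tfrac{24}{5}\,d(\ln h)$, confirming the metric is conformally K\"ahler). With this $c$ the reduced equation is $-\tfrac12 y''-\tfrac{59}{10\phi}y'-\tfrac{4}{\phi^{2}}y=\lambda-\tfrac{4}{\phi^{2}}$, whose general solution is $y(\phi)=-\tfrac{5\lambda}{84}\phi^{2}+c_{1}\phi^{-4/5}+c_{2}\phi^{-10}+1$. No reason is given for this particular $c$; your suggested values would lead to a different Euler equation and a different compatibility relation, which may or may not admit an admissible root for $m=1$, so you should not assume that the choice of $c$ is innocuous.

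Second, the verification step: contrary to your expectation, the paper does \emph{not} carry out a sign analysis of a polynomial $P(x)$ at $x=1$ versus $x\to\infty$, nor exhibit the root in radicals. Because of the fractional exponent $-4/5$ the compatibility relation is transcendental rather than polynomial, and the paper simply invokes Maple to find $\phi_{1}\approx 0.155\,\phi_{0}$ numerically, to read off $\lambda\approx 13.371/\phi_{0}^{2}>0$, and to check that $y>0$ on $(\phi_{1},\phi_{0})$. So the ``main obstacle'' you correctly identify is resolved by computer algebra rather than by hand, and the published proof is accordingly much shorter (and less self-contained) than you anticipate.
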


\begin{proof}
Suppose that $f=-\frac{7}{5}\phi'\phi$ and $h=\phi$ for some positive and decreasing function $\phi$ (so $f=-\frac{7}{5}h'h$).
From~(\ref{gauduchon_critical}), we get the equation
$$-\frac{\phi'''}{\phi'}-\frac{59}{5\phi}\phi''-\frac{4}{\phi^2}\phi'^2+\frac{4}{\phi^2}=\lambda,$$
where $\lambda$ is a constant.

We introduce again the function $y(\phi)$ defined by $\phi'=-\sqrt{y(\phi)}$, then $\phi''=\frac{1}{2}y'(\phi)$ and ${\phi'''}=\frac{1}{2}y''(\phi){\phi'}$
(here $y'(\phi),y''(\phi)$ are derivatives with respect to $\phi$).
Then the equation becomes
$$-\frac{1}{2}y''-\frac{59}{10\phi}y'-\frac{4}{\phi^2}y=\lambda-\frac{4}{\phi^2}.$$
The solution is given by
\begin{equation}\label{sol}
y(\phi)=-\frac{5\lambda \phi^2}{84}+c_1\phi^{-\frac{4}{5}}+c_2\phi^{-10}+1,
\end{equation}
for some constants $c_1$ and $c_2.$

Denote by $\phi(0)=\phi_0>0$ and $\phi(l)=\phi_1>0$. We need a solution such that $y(\phi_0)=y(\phi_1)=0$ and $y'(\phi_0)=-\frac{10m}{7\phi_0}$
and $y'(\phi_1)=\frac{10m}{7\phi_1}.$

When $m=1$, then using Maple we get that $$\phi_1\approx 0.155\, \phi_0.$$ The constant $\lambda\approx\frac{13.371}{\phi_0^2}$ is then positive.
Hence, we obtain a positive
and decreasing function $\phi$ on the interval $(0,l)$. We can also check using Maple that the function $y(\phi)$ is positive on the interval $(\phi_1,\phi_0)$. 
The rest of the argument is the same as in the construction of metrics of constant Chern scalar curvature. 
\end{proof}
\subsection{The existence of Gauduchon critical metrics in a conformal class}

Let $(M,J,g)$ be a closed almost-Hermitian manifold of real dimension $2n$, 
Suppose that $\tilde{g}=e^{2f}g$ then we have the following conformal changes~\cite{MR742896,Lejmi:2017aa} :
$$e^{2f}\left(\tilde{s}^C+\frac{n}{2(n-1)}{\delta}^{\tilde{g}}\tilde{\theta} \right)=\left(s^C+\frac{n}{2(n-1)}\delta^g\theta\right)+2n\Delta^gf-n(2n-2)g(df,df),$$
$$e^{2f}\left(\tilde{s}+\frac{1}{2(n-1)}{\delta}^{\tilde{g}}\tilde{\theta} \right)=\left(s+\frac{1}{2(n-1)}\delta^g\theta\right)+2\Delta^gf-(2n-2)g(df,df),$$
where $\Delta^g$ is the Riemannian Laplacian with respect to the metric $g.$ Consider the conformal change $e^{2f}=\phi^{\frac{2}{n-1}}$ then the equations become:
\begin{equation}\label{yamabe_1}
\lambda \phi^{\frac{n+1}{n-1}}=\left(s^C+\frac{n}{2(n-1)}\delta^g\theta\right)\phi+\frac{2n}{n-1}\Delta^g\phi,
\end{equation}
\begin{equation}\label{yamabe_2}
\lambda \phi^{\frac{n+1}{n-1}}=\left(s+\frac{1}{2(n-1)}\delta^g\theta\right)\phi+\frac{2}{n-1}\Delta^g\phi.
\end{equation}
for some constant $\lambda$. As observed by Gauduchon~\cite{MR742896}, these are of Yamabe-type equation~\cite{MR0125546} with the critical exponent. 

Now, we concentrate on the equation~(\ref{yamabe_1}) (the following arguments can be also applied to the equation~(\ref{yamabe_2})). We consider the functional~\cite{MR0125546}
\begin{equation*}
E(\phi)=\frac{\displaystyle\int_M \frac{2n}{n-1}\|\nabla^g \phi\|_g^2+\left( s^C+\frac{n}{2(n-1)}\delta^g\theta   \right)\,\phi^2   v_g}{\left(\displaystyle\int_M\phi^{\frac{2n}{n-1}}v_g\right)^{\frac{n-1}{n}}}.
\end{equation*}
Then,
\begin{equation*}
\frac{d}{dt}\left( E(\phi+t\psi) \right)|_{t=0}=\frac{2\left( \displaystyle\int_M   \left(\frac{2n}{n-1} \Delta^g\phi+\left( s^C+\frac{n}{2(n-1)}\delta^g\theta   \right)-E(\phi)\phi^{\frac{n+1}{n-1}}  \left(\displaystyle\int_M\phi^{\frac{2n}{n-1}}v_g\right)^{-1}   \right)\psi v_g   \right)}{\left(\int_M\phi^{\frac{2n}{n-1}}v_g\right)^{\frac{n-1}{n}}}
\end{equation*}
Therefore $\phi$ is a critical point of the functional $E$ if and only if it satisfies the equation~(\ref{yamabe_1}) with $\lambda= E(\phi) \left(\displaystyle\int_M\phi^{\frac{2n}{n-1}}v_g\right)^{-1}.$

On the other hand, $$E(\phi)=E(\tilde{g})=\frac{\displaystyle\int_M \tilde{s}^C+\frac{n}{2(n-1)}{\delta}^{\tilde{g}}\tilde{\theta}\, v_{\tilde{g}}}{\left(\displaystyle\int_Mv_{\tilde{g}}\right)^{\frac{n-1}{n}}}=\frac{\displaystyle\int_M \tilde{s}^C\, v_{\tilde{g}}}{\left(\displaystyle\int_Mv_{\tilde{g}}\right)^{\frac{n-1}{n}}},$$
where $\tilde{g}=\phi^{\frac{2}{n-1}}g.$ Set 
\begin{eqnarray*}
G(J,[g])&=&\inf\{E(\tilde{g})\,|\,\tilde{g}\text{ conformal to } g\}   \\
&=&\inf\{E(\phi)\,|\,\phi \text{ smooth and positive function on } M\}.
\end{eqnarray*}
Then $G(J,[g])$ is a conformal invariant. As a straightforward application of Aubin's work~\cite{MR0431287}, we get
\begin{prop}\label{Aubin_result}\cite{MR0431287}
Let $(M,J,g)$ be a closed almost-Hermitian of real dimension $2n$. If $G(J,[g])<2n^2\vol_{S^{2n}}^{\frac{1}{n}}$, where $\vol_{S^{2n}}$ is the volume of the unit sphere in $\mathbb{R}^{2n+1}$, then there exists a metric $\tilde{g}\in[g]$ such that $\tilde{s}^C+\frac{n}{2(n-1)}{\delta}^{\tilde{g}}\tilde{\theta}=G(J,[g]).$ Moreover, if $G(J,[g])\leq 0,$ then there exists a unique solution (up to a constant) to the equation~(\ref{yamabe_1}).
\end{prop}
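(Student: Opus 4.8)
The plan is to recognize equation~(\ref{yamabe_1}) as the Euler--Lagrange equation of the functional $E$ attached to the second order, self-adjoint, elliptic operator
$L\phi=\frac{2n}{n-1}\Delta^g\phi+\bigl(s^C+\frac{n}{2(n-1)}\delta^g\theta\bigr)\phi$, whose zeroth order coefficient is a fixed smooth function on $M$ and whose leading coefficient $\frac{2n}{n-1}$ is precisely the critical Sobolev exponent $2^\ast=\frac{2\cdot 2n}{2n-2}$ in real dimension $2n$. Since, by~\cite[Corollary 4.5]{Lejmi:2017aa}, the conformal change of $s^C+\frac{n}{2(n-1)}\delta^g\theta$ is the same in the almost-Hermitian setting as in the integrable one, there is nothing about the almost-complex structure left to exploit: the assertion is a purely Riemannian statement about $L$, and the strategy is simply to invoke Aubin's resolution of the generalized Yamabe problem~\cite{MR0431287} with $L$ in place of the conformal Laplacian. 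The one point requiring care in transcription is the threshold constant: a direct computation with the sharp Sobolev inequality on $\mathbb R^{2n}$ shows that, for the leading coefficient $\frac{2n}{n-1}$, the generalized Yamabe constant of the round sphere $(S^{2n},g_{\mathrm{round}})$ for $L$ equals $2n^2\,\vol_{S^{2n}}^{1/n}$, which is exactly the bound appearing in the hypothesis.

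For the existence half I would run the standard subcritical approximation. For each $q\in(2,2^\ast)$ one minimizes the subcritical functional $E_q$ obtained from $E$ by replacing the exponent $\frac{2n}{n-1}$ by $q$; the embedding $W^{1,2}(M)\hookrightarrow L^q(M)$ being compact, a nonnegative minimizer $\phi_q$ exists, and by elliptic regularity together with a Harnack inequality it is smooth and strictly positive. Aubin's key estimate is that the strict inequality $G(J,[g])<2n^2\,\vol_{S^{2n}}^{1/n}$ prevents the $\phi_q$ from concentrating at a point as $q\to 2^\ast$: after the usual $L^{2^\ast}$-normalization, a subsequence converges strongly in $W^{1,2}(M)$ to a nonzero limit $\phi$, which is therefore a minimizer of $E$. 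Critical-exponent elliptic regularity (Trudinger's iteration) gives $\phi\in C^\infty$, the Harnack inequality gives $\phi>0$, and $\phi$ solves~(\ref{yamabe_1}) with $\lambda=G(J,[g])\bigl(\int_M\phi^{\frac{2n}{n-1}}v_g\bigr)^{-1}$; equivalently $\tilde g=\phi^{\frac{2}{n-1}}g$ satisfies $\tilde s^C+\frac{n}{2(n-1)}\delta^{\tilde g}\tilde\theta=G(J,[g])$.

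For the uniqueness statement when $G(J,[g])\leq 0$, note first that existence in this range is already contained in the previous paragraph, since the threshold $2n^2\vol_{S^{2n}}^{1/n}$ is positive (the borderline case $G(J,[g])=0$ is again covered by Aubin's argument, while $G(J,[g])<0$ is even easier as the functional is then coercive). Suppose $\tilde g_1=\phi_1^{\frac{2}{n-1}}g$ and $\tilde g_2=\phi_2^{\frac{2}{n-1}}g$ both realize the same constant $\lambda\leq 0$. Writing $\phi_2=u\,\phi_1$ and applying the conformal change formula with base metric $\tilde g_1$ yields $\lambda\,u^{\frac{n+1}{n-1}}=\lambda\,u+\frac{2n}{n-1}\Delta^{\tilde g_1}u$ on the closed manifold $M$. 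Evaluating at a point where $u$ attains its maximum (where $\Delta^{\tilde g_1}u\geq 0$) and at a point where it attains its minimum (where $\Delta^{\tilde g_1}u\leq 0$), and using $\lambda<0$, forces $u\leq 1$ and $u\geq 1$ respectively, hence $u\equiv 1$; when $\lambda=0$ the identity reads $\Delta^{\tilde g_1}u=0$, so $u$ is constant. This gives uniqueness up to homothety.

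I expect essentially all of the difficulty to be packaged inside Aubin's theorem itself, whose heart is the test-function computation showing that being strictly below the round-sphere threshold rules out the loss of compactness in the critical Sobolev embedding. The genuine tasks here are only to (i) verify that $L$ is a bona fide self-adjoint elliptic operator of conformal-Laplacian type, (ii) identify the sphere threshold for the leading coefficient $\frac{2n}{n-1}$ in dimension $2n$ as $2n^2\vol_{S^{2n}}^{1/n}$, and (iii) supply the standard critical-exponent regularity and positivity. Step (ii) is the one most prone to an arithmetic slip, and is the point I would check most carefully.
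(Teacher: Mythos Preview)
The paper does not give its own proof of this proposition: it is stated as a direct citation of Aubin's work \cite{MR0431287}, with the only preparatory remark being the observation just before the statement that equation~(\ref{yamabe_1}) is ``of Yamabe-type equation with the critical exponent.'' Your proposal therefore goes well beyond the paper, but it does so correctly: the recognition of $L$ as a self-adjoint Schr\"odinger-type operator with critical Sobolev exponent, the subcritical minimization plus Aubin's concentration-compactness threshold, the Trudinger/Harnack regularity, and the maximum-principle uniqueness for $\lambda\le 0$ are exactly the ingredients of Aubin's argument. Your identification of the threshold is also right: in real dimension $N=2n$ the sharp Sobolev constant satisfies $K(N)^{-2}=\tfrac{N(N-2)}{4}\,\vol_{S^N}^{2/N}$, so with leading coefficient $a=\tfrac{2n}{n-1}$ one gets $a\,K(2n)^{-2}=\tfrac{2n}{n-1}\cdot\tfrac{2n(2n-2)}{4}\,\vol_{S^{2n}}^{1/n}=2n^2\,\vol_{S^{2n}}^{1/n}$, confirming the bound in the hypothesis. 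In short, your sketch is sound and simply unpacks the black-box citation the paper relies on.
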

\begin{prop}~\cite{MR0431287}
Let $(M,J,g)$ be a closed almost-Hermitian of real dimension $2n$. Suppose that there exists a point $p\in M$ such that $$s^C(p)+\frac{n}{2(n-1)}\left(\delta^g\theta\right)(p)-\frac{n}{2n-1}s^g(p)<0,$$
there there exists a metric $\tilde{g}\in[g]$ such that $\tilde{s}^C+\frac{n}{2(n-1)}{\delta}^{\tilde{g}}\tilde{\theta}=G(J,[g]).$
\end{prop}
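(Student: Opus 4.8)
The plan is to reduce the statement to Proposition~\ref{Aubin_result}. Since $G(J,[g])=\inf\{E(\phi):\phi>0\ \text{smooth}\}$, it is enough to exhibit one positive function $\phi$ on $M$ with $E(\phi)<2n^2\vol_{S^{2n}}^{\frac1n}$; Proposition~\ref{Aubin_result} then produces the minimizing metric $\tilde g=\phi^{\frac{2}{n-1}}g\in[g]$ with $\tilde{s}^C+\tfrac{n}{2(n-1)}\delta^{\tilde g}\tilde\theta=G(J,[g])$. The first point is that $E$ is exactly a Yamabe-type functional in Aubin's sense: Sobolev coefficient $a=\tfrac{2n}{n-1}$, critical exponent $\tfrac{2n}{n-1}$, potential $V:=s^C+\tfrac{n}{2(n-1)}\delta^g\theta$, and — this is why the number $2n^2\vol_{S^{2n}}^{\frac1n}$ occurs in Proposition~\ref{Aubin_result} — the threshold equals $a\,K(2n,2)^{-2}$, where $K(2n,2)$ is the best constant in $W^{1,2}(\mathbb{R}^{2n})\hookrightarrow L^{\frac{2n}{n-1}}(\mathbb{R}^{2n})$; equivalently it is the value of $E$ on the Aubin--Talenti extremals of $\mathbb{R}^{2n}$ transplanted onto $M$.

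The main step is Aubin's concentration argument. Fix geodesic normal $g$-coordinates centred at the point $p$, a bump function $\eta$ supported in a small coordinate ball, and the test functions $\phi_\varepsilon=\eta\cdot u_\varepsilon$ with $u_\varepsilon(x)=\big(\varepsilon/(\varepsilon^2+|x|^2)\big)^{n-1}$, and expand $E(\phi_\varepsilon)$ as $\varepsilon\to0$. The leading term is the Euclidean Sobolev quotient $a\,K(2n,2)^{-2}=2n^2\vol_{S^{2n}}^{\frac1n}$. There is no $O(\varepsilon)$ correction: in geodesic coordinates $g^{ij}\sqrt{\det g}=\delta^{ij}+O(|x|^2)$, and the linear Taylor term of $V$ integrates against the radial weight $\phi_\varepsilon^2$ to zero by parity. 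At order $\varepsilon^2$, the quadratic curvature terms in $g^{ij}\sqrt{\det g}$, integrated against $|\nabla u_\varepsilon|^2$, contribute a multiple of $s^g(p)$ to the Dirichlet part, while $\int_M V\phi_\varepsilon^2\,v_g=V(p)\int_M\phi_\varepsilon^2\,v_g+o(\varepsilon^2)$ contributes a multiple of $V(p)$. After dividing by $\big(\int_M\phi_\varepsilon^{\frac{2n}{n-1}}v_g\big)^{\frac{n-1}{n}}$, these combine into a strictly positive universal constant times
\[ \left(s^C(p)+\frac{n}{2(n-1)}(\delta^g\theta)(p)-\frac{n}{2n-1}s^g(p)\right)\,\varepsilon^{2}+o(\varepsilon^{2}); \]
the coefficient $\tfrac{n}{2n-1}=\tfrac{(2n-2)a}{4(2n-1)}$ is forced — it is exactly the proportionality constant that would turn $V$ into the conformal-Laplacian potential $\tfrac{2n-2}{4(2n-1)}s^g$ and thus annihilate this term. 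By hypothesis the bracket is negative at $p$, so $E(\phi_\varepsilon)<2n^2\vol_{S^{2n}}^{\frac1n}$ for all small $\varepsilon>0$; hence $G(J,[g])<2n^2\vol_{S^{2n}}^{\frac1n}$, and Proposition~\ref{Aubin_result} concludes.

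The hard part is the asymptotic expansion: tracking the numerical constants multiplying $s^g(p)$ and $V(p)$ so that they assemble into the displayed bracket with the asserted sign, and — the delicate case, and the one relevant to ruled surfaces — the borderline real dimension $2n=4$, where $\int_M\phi_\varepsilon^2\,v_g$ and the curvature correction to the Dirichlet integral are both only logarithmically divergent; there the $\varepsilon^2$ correction is replaced by one of order $\varepsilon^2\log(1/\varepsilon)$, but the same critical ratio $\tfrac{n}{2n-1}=\tfrac23$ survives. All of this is classical and due to Aubin~\cite{MR0431287}; the only input specific to the present setting is that, by~\eqref{yamabe_1}, the conformal changes of $s^C+\tfrac{n}{2(n-1)}\delta^g\theta$ are genuinely of Yamabe type at the critical exponent, so that Aubin's machinery applies directly.
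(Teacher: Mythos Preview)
The paper does not supply its own proof of this proposition: it is stated as a direct citation of Aubin~\cite{MR0431287}, with no argument given. Your proposal correctly reconstructs the argument the citation points to --- namely Aubin's test-function expansion that verifies the strict inequality in Proposition~\ref{Aubin_result} --- and your identification of the coefficient $\tfrac{n}{2n-1}$ as precisely the ratio that would convert the potential $V=s^C+\tfrac{n}{2(n-1)}\delta^g\theta$ into the conformal-Laplacian potential (for Laplacian coefficient $a=\tfrac{2n}{n-1}$ in real dimension $2n$) is exactly right, as is your remark on the logarithmic correction when $2n=4$. There is nothing to compare: you have supplied what the paper only gestures at.
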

\begin{defn}\label{fundamental_constant}
The fundamental constant $C(J,[g])$ (see~\cite{MR0486672,MR779217,MR1712115}) of a closed almost-Hermitian manifold $(M,J,g)$ is defined as 
$$C(J,[g])=\displaystyle\int_M s_0^C v_{g_0},$$
where $g_0$ is the unique Gauduchon metric in $[g]$ with total volume equal to $1$ and $s_0^C$ the Chern scalar curvature of $(J,g_0).$
\end{defn}
\begin{cor}
Suppose that the fundamental constant $C(J,[g])\leq 0,$ then there exists a unique $\tilde{g}\in[g]$ (up to a constant) such that  $\tilde{s}^C+\frac{n}{2(n-1)}{\delta}^{\tilde{g}}\tilde{\theta}=G(J,[g]).$
\end{cor}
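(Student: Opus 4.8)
The plan is to deduce from the hypothesis $C(J,[g])\le 0$ that $G(J,[g])\le 0$, and then to invoke Proposition~\ref{Aubin_result}.

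The central point is that the functional $E$, evaluated on the volume-normalized Gauduchon representative of the conformal class, recovers the fundamental constant. First I would take $g_0\in[g]$ to be the Gauduchon metric with $\int_M v_{g_0}=1$ from Definition~\ref{fundamental_constant}, and write $g_0=\phi_0^{\frac{2}{n-1}}g$ for a positive smooth function $\phi_0$ on $M$. Using the simplified form of $E$ recorded in the excerpt, namely $E(\tilde g)=\bigl(\int_M \tilde s^C\, v_{\tilde g}\bigr)\bigl/\bigl(\int_M v_{\tilde g}\bigr)^{\frac{n-1}{n}}$ (the codifferential term in the numerator integrates to zero, and in any case $\delta^{g_0}\theta_0=0$ since $g_0$ is Gauduchon), one obtains
$$E(g_0)=\frac{\displaystyle\int_M s_0^C\, v_{g_0}}{\left(\displaystyle\int_M v_{g_0}\right)^{\frac{n-1}{n}}}=\int_M s_0^C\, v_{g_0}=C(J,[g]).$$
Since $g_0$ is one particular metric conformal to $g$, the definition of $G(J,[g])$ as an infimum over the conformal class gives $G(J,[g])\le E(g_0)=C(J,[g])\le 0$.

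It then remains only to apply Proposition~\ref{Aubin_result}. Because $G(J,[g])\le 0<2n^2\vol_{S^{2n}}^{\frac{1}{n}}$, that proposition produces a metric $\tilde g\in[g]$ with $\tilde s^C+\frac{n}{2(n-1)}\delta^{\tilde g}\tilde\theta=G(J,[g])$; and since moreover $G(J,[g])\le 0$, its uniqueness clause shows that the corresponding solution of the Yamabe-type equation~(\ref{yamabe_1}), hence the metric $\tilde g$, is unique up to a constant. This is precisely the asserted statement.

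The only step requiring any care is recognizing the identity $E(g_0)=C(J,[g])$; everything analytic — the existence of a minimizer below the Aubin threshold and the uniqueness in the non-positive case — is already encapsulated in Proposition~\ref{Aubin_result}, which is Aubin's resolution of the Yamabe problem transcribed to~(\ref{yamabe_1}). I therefore do not anticipate any genuine obstacle: the corollary is a direct bookkeeping consequence of the preceding results.
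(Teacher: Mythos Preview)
Your proposal is correct and follows exactly the paper's approach: show $G(J,[g])\le C(J,[g])$ by evaluating $E$ at the normalized Gauduchon representative, then invoke Proposition~\ref{Aubin_result}. The paper's own proof is in fact terser than yours---it simply asserts that $C(J,[g])\le 0$ implies $G(J,[g])\le 0$ and applies the proposition---so you have filled in the one-line justification it leaves implicit.
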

\begin{proof}
Recall that $G(J,[g])=\inf\{E(\tilde{g})\,|\,\tilde{g}\text{ conformal to } g\}$ where $E(\tilde{g})=\frac{\displaystyle\int_M \tilde{s}^C\, v_{\tilde{g}}}{\left(\displaystyle\int_Mv_{\tilde{g}}\right)^{\frac{n-1}{n}}}.$
Hence, if $C(J,[g])\leq 0$ then $G(J,[g])\leq 0$ and then we apply Proposition~\ref{Aubin_result}.
\end{proof}
\begin{rem}
We know from~\cite{MR3696598} that if~$C(J,[g])\leq 0$, then there exists a metric $\tilde{g}\in[g]$ such that $\tilde{s}^C=C(J,[g]).$ However, the metric $\tilde{g}$
is not necessarily Gauduchon hence $\tilde{s}^C+\frac{n}{2(n-1)}{\delta}^{\tilde{g}}\tilde{\theta}$ of the metric $\tilde{g}$ is not necessarily constant. 
\end{rem}

When $G(J,[g])>0,$ we can also deduce the following from Bahri and Brezis's work~\cite{MR967364} (see also~\cite{MR992212})
\begin{prop}\cite{MR967364}
Let $(M,J,g)$ be a closed almost-Hermitian of real dimension $2n$ and suppose that $G(J,[g])>0.$ If $n=2$ or $n=3$, then there exists a metric $\tilde{g}\in[g]$ such that $\tilde{s}^C+\frac{n}{2(n-1)}{\delta}^{\tilde{g}}\tilde{\theta}=G(J,[g]).$
If $n\geq 4$ and if the homology group $H_1(M,\mathbb{Z}_2)$ or $H_2(M,\mathbb{Z}_2)$ is non trivial then there exists a solution to the equation~(\ref{yamabe_1}).
\end{prop}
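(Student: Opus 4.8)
The plan is to recast equation~(\ref{yamabe_1}) as a Yamabe-type equation with the critical Sobolev exponent and a prescribed (non-canonical) potential, and then to invoke the existence theory of Bahri and Brezis~\cite{MR967364} once the variational picture has been matched to theirs. Writing $V:=s^C+\frac{n}{2(n-1)}\delta^g\theta\in C^\infty(M)$, equation~(\ref{yamabe_1}) reads $\frac{2n}{n-1}\Delta^g\phi+V\phi=\lambda\,\phi^{\frac{n+1}{n-1}}$; since $\dim_{\mathbb R}M=2n$, the exponent satisfies $\frac{n+1}{n-1}+1=\frac{2n}{n-1}=\frac{2\cdot 2n}{2n-2}$, so this is precisely of the form $a\Delta_g u+Vu=u^{2^\ast-1}$, $u>0$, treated by Bahri--Brezis. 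By the computation already carried out in this section, positive solutions with $\lambda>0$, normalized by $\int_M\phi^{\frac{2n}{n-1}}v_g=1$, are exactly the positive critical points of $E$ on the unit sphere of $L^{\frac{2n}{n-1}}(M)$, with $G(J,[g])=\inf E$; and, using $E(\phi)=E(\tilde g)=\big(\int_M\tilde s^C v_{\tilde g}\big)\big(\int_M v_{\tilde g}\big)^{-\frac{n-1}{n}}$ for $\tilde g=\phi^{\frac{2}{n-1}}g$, any such critical point gives $\tilde s^C+\frac{n}{2(n-1)}\delta^{\tilde g}\tilde\theta=\lambda$, with $\lambda=G(J,[g])$ when $\phi$ realizes the infimum.

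Since Proposition~\ref{Aubin_result} already settles the case $G(J,[g])<2n^2\vol_{S^{2n}}^{1/n}$, and since testing $E$ with bubbles concentrated at a point shows $G(J,[g])\le 2n^2\vol_{S^{2n}}^{1/n}$ always, only the borderline equality case remains, and it is here that direct minimization can fail through concentration and the finer analysis of Bahri--Brezis is required. The key analytic input is the Struwe-type global compactness description of Palais--Smale sequences for $E$: after passing to a subsequence such a sequence splits as a (possibly trivial) solution of~(\ref{yamabe_1}) plus finitely many rescaled standard bubbles, so the Palais--Smale condition holds below the first bubbling level. Granting this, the two assertions are the output of Bahri--Brezis. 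For $2n=4$ and $2n=6$ (that is, $n=2,3$), a sharper local analysis at a would-be concentration point shows the infimum $G(J,[g])$ is attained, giving the desired minimizing metric. For $2n\ge 8$ (that is, $n\ge 4$) the local analysis is inconclusive, and one instead exploits the topology of $M$: non-triviality of $H_1(M;\mathbb Z_2)$ or $H_2(M;\mathbb Z_2)$ yields a $\mathbb Z_2$-cohomologically essential family of test configurations which, through a Lusternik--Schnirelmann/index min-max, must detect a critical point below the relevant bubbling level, hence a (not necessarily minimizing) positive solution of~(\ref{yamabe_1}). In either case, passing back through $\tilde g=\phi^{\frac{2}{n-1}}g$ yields the stated metric.

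The hard part is the loss of compactness intrinsic to the critical exponent: one must prevent minimizing, respectively min-max, sequences from dissolving entirely into bubbles. This is exactly what the delicate energy expansions of Bahri--Brezis accomplish --- carefully estimating $E$ on bubble-type configurations, anchored in the high-dimensional case along a nontrivial $\mathbb Z_2$-cycle of $M$, and comparing the resulting levels with $2n^2\vol_{S^{2n}}^{1/n}$ --- together with the technically heavy but by-now-standard profile decomposition for Palais--Smale sequences at the critical exponent. Everything else --- the variational formulation of~(\ref{yamabe_1}), its Euler--Lagrange equation, and the conformal invariance of $G(J,[g])$ --- is already established above, so the proof reduces to matching notation with~\cite{MR967364} and citing it.
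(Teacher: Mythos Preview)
Your writeup is largely an exposition of the \emph{internal} machinery of Bahri--Brezis (Struwe decomposition, bubbling expansions, $\mathbb Z_2$-index min-max). That is all inside the black box being cited, and the paper does not reprove any of it. What the paper's proof actually does --- and what your proposal omits --- is the one verification that is \emph{not} automatic from ``matching notation'': checking that the hypotheses of~\cite{MR967364} are satisfied. Bahri--Brezis require the linear Schr\"odinger operator $\frac{2n}{n-1}\Delta^{g}+V$ (with $V=s^C+\frac{n}{2(n-1)}\delta^g\theta$) to be strictly positive. The paper secures this by first replacing $g$ within its conformal class by a representative for which $V>0$ pointwise; the existence of such a representative when $G(J,[g])>0$ is the classical Yamabe/Trudinger observation in the positive case (this is the content of the citations to~\cite{MR0279731} and~\cite{MR1636569}). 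Once that is done, Bahri--Brezis applies as a black box.

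So your ``the proof reduces to matching notation and citing'' is not quite right: there is one genuine step, and you skipped it. You could also close the gap differently, by noting that $G(J,[g])>0$ directly forces the quadratic form $\phi\mapsto\int_M\bigl(\tfrac{2n}{n-1}|\nabla^g\phi|^2+V\phi^2\bigr)v_g$ to be positive definite (via $Q(\phi)=Q(|\phi|)\ge G(J,[g])\,\|\phi\|_{L^{2n/(n-1)}}^2>0$), hence the operator is coercive --- provided that coercivity, rather than pointwise positivity of $V$, is what~\cite{MR967364} actually assumes. Either way, this positivity verification is the substance of the argument; the detailed bubbling and min-max narrative you gave belongs to the cited reference, not to the proof here.
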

\begin{proof}
To apply Bahri and Brezis's result~\cite{MR967364}, we only need to prove that there exists a metric $\tilde{g}\in[g]$ such that $\tilde{s}^C+\frac{n}{2(n-1)}{\delta}^{\tilde{g}}\tilde{\theta}$ is everywhere positive and thus
the operator $\Delta^{\tilde{g}}+\left(\tilde{s}^C+\frac{n}{2(n-1)}{\delta}^{\tilde{g}}\tilde{\theta}\right)$ will be strictly positive.
The existence of such metric $\tilde{g}\in[g]$ follows from Yamabe's work in the positive case (see~\cite{MR0279731} and also~\cite[Theorem p.150]{MR1636569}
\end{proof}

\bibliographystyle{abbrv}

\bibliography{biblio_Hirzebruch}

\end{document}